\theoremstyle{definition} 
\newtheorem{definition}{Definition}
\theoremstyle{plain} 
\newtheorem{theorem}{Theorem}
\newtheorem*{theorem*}{Theorem}
\newtheorem{conjecture}[theorem]{Conjecture}
\newtheorem{corollary}[theorem]{Corollary}
\newtheorem*{corollary*}{Corollary}
\newtheorem{lemma}[theorem]{Lemma}
\theoremstyle{remark} 
\newtheorem{remark}{Remark}
\newtheorem{notation}{Notation}
\newtheorem{question}{Question}
\newtheorem*{question*}{Question}
\newcommand{\I}{\mathscr I}
\newcommand{\wpi}{\widehat{\pi}}
\newcommand{\p}{p}
\DeclareMathOperator{\Is}{IS}
\title{{ Unlikely and just likely intersections for high dimensional families of elliptic curves}} 
\author{Asvin G.\footnote{\textit{Department of Mathematics; University of Wisconsin, Madison.}}
\footnote{\textit{email}: gasvinseeker94@gmail.com, \textit{website}: \href{https://asving.com/}{asving.com} }} 
\date{} 
\begin{document}





\maketitle 


\begin{abstract}
Given two varieties $V,W$ in the n-fold product of modular curves (over a finite field, number field or ring of integers of a number field), we answer affirmatively a question (formulated by Shou-Wu Zhang's AIM group) on whether the set of points in $V$ that are Hecke translations of some point on $W$ is dense in $V$. We need to make some (necessary) assumptions on the dimensions of the co-ordinate projections of $V,W$ but for instance, when $V$ is a divisor and $W$ is a curve, no further assumptions are needed.

We also examine the necessity of our assumptions in the case of unlikely intersections and show that, contrary to exceptions, two curves in a high dimensional space over a finite field can intersect infinitely often up to Hecke translations.
    
\end{abstract}






\section{Introduction}

In \cite{chai2012abelian}, Chai and Oort ask the following natural question: \emph{For every algebraically closed field $k$ and $g \geq 4$, does there exist an Abelian variety over $k$ of dimension $g$ that is not isogenous to the Jacobian of a stable curve?}. This conjecture was settled in characteristic $0$ by Tsimerman \cite{tsimerman2012existence} relying on the Andre-Oort conjecture about CM points in subvarieties. The analogous question over finite fields seems much harder since every point is a CM point. Nevertheless, the conjecture is believed to be true even over finite fields.

The above question can be interpreted as an \emph{unlikely-intersections} question for the Torelli locus inside $\mathcal A_g$, the moduli space of principally polarized abelian varieties in dimension $g$. In this form, it becomes natural to replace the Torelli locus with an arbitrary proper subvariety and retain the conjecture. In \cite{shankar2018unlikely}, Shankar and Tsimerman formulate a heuristic which predicts a negative answer for $g= 4$. We explain this heuristic in the context of $Y(1)^g$, the moduli space of dimension $g$ abelian varieties isomorphic to the product of $g$ elliptic curves instead of $\mathcal A_g$:

Let $\Is_n(m)$ be the set of isogeny classes corresponding to the abelian varieties parameterized by $Y(1)^n(\mathbb F_{q^m})$. \cite[Theorem 1.1]{dipippo1998real} shows that the size of the set $\Is_n(m)$ is about $q^{nm/2}$ (as $q^m \to \infty$) . It seems reasonable to treat the map $i: Y(1)^n(\mathbb F_{q^m}) \to \Is_n(m)$ sending a point $x \in Y(1)^n(\mathbb F_q)$ to the isogeny class of the corresponding abelian variety $A_x$ as a \emph{random} map and suppose that every isogeny class is of size $q^{nm/2}$ (ignoring problems related to supersingularity and the like). 

Let $V,W \subset Y(1)^n$ now be curves. For $n \geq 2$ and $m$ large enough, one would expect $|i(V(\mathbb F_{q^m}))| \sim q^m$ and $|i(V(\mathbb F_{q^m})) \cap i(W(\mathbb F_{q^m}))| \sim q^{2m}/q^{nm/2}$. Summing over all $m$, one expects that the number of points on $V$ geometrically isogenous to a point on $W$ is approximately $(1-q^2/q^{n/2})^{-1}$ which is a finite quantity (for $n \geq 5$).

As our first main result, we produce families of counter examples to this heuristic.
\theoremstyle{plain}
\newtheorem*{thm: ex1}{Theorem \ref{lem: postive low dim example}}

\begin{thm: ex1}
Let $V = (t^{a_1},\dots,t^{a_n})$ and $W = (t^{b_1},\dots,t^{b_n})$ for $a_i,b_i \geq 2$ (and without loss of generality, coprime to $p$) be two rational curves in $Y(1)_S^n$ for $S = \operatorname{Spec} \mathbb F_q$.

If the numbers $b_i/a_i$ are all powers of a fixed rational number $\gamma$, then there are infinitely many pairs of points on $V,W$ isogenous to each other.

Moreover, if we assume Artin's conjecture on primitive roots, then the same is true for arbitrary $a_i,b_i$.
\end{thm: ex1}

\theoremstyle{plain}
\newtheorem*{thm: ex2}{Theorem \ref{thm: ex2}}

\begin{thm: ex2}
Let $C = (t+b_0,t+b_1\dots,t+b_n), D = \Delta \subset Y(1)_S^{n+1}$ for $S = \operatorname{Spec} \mathbb F_q$ such that the $b_i\in \mathbb F_q$ are pairwise distinct and satisfy
\[\frac{b_i-b_0}{b_j-b_0} \in \mathbb F_p\]
for all $1 \leq i,j \leq n$. Then there are infinitely many points on $C$ isogenous to a point on $D$.
\end{thm: ex2}

More generally, we ask:

\begin{question*}
Do there exist curves $C,D \subset Y(1)_S^n$ for $S = \operatorname{Spec} \mathbb F_q$ (for $n \geq 3$) with projection to any factor dominant such that there are only finitely many points on $C$ isogenous to some point on $D$?
\end{question*}

Given the heuristic, one would expect the above question to have a negative answer for $n = 3,4$ and a positive answer for $n\geq 5$. It would be very interesting to prove either statement!

The above examples can be considered as examples of \emph{unlikely intersections}, i.e., the dimensions of the subvarieties add up to less than the dimension of the ambient space. Next, we consider the regime of \emph{just-likely intersections} and prove a positive result.

Consider two families of elliptic curves $\mathcal E, \mathcal F$ relative to $C$, a curve over a finite field or the ring of integers of a number field (possibly localized at some finite set of primes). In these cases, it is known under mild restrictions on the two families that there are infinitely many geometric points $x \in C$ such that the fibers $\mathcal E_x$ and  $\mathcal F_x$ are isogenous to each other (\cite[Proposition 7.3]{chai-oort},\cite{charles2018exceptional}).

To better understand this result, let us reformulate it in terms of moduli spaces. A family of elliptic curves over $C$ gives rise to a map 
\begin{align*}
    C &\to X(1)\\
    x &\to j(\mathcal E_x)
\end{align*}
where $X(1)$ is the proper modular curve so that our set up above corresponds to a map $f: C \to X(1)\times X(1)$. The geometric points $x \in C$ such that $\mathcal E_x$ is isogenous to $\mathcal F_x$ correspond exactly to the intersection of $f(C)$ with $T_N(\Delta)$, a Hecke translate of the diagonal $\Delta \subset X(1)^2$. Since $f(C)$ and $T_N(\Delta)$ have complimentary dimension, we expect there to be finitely many points in this intersection for any fixed $N$ and the question is about the behaviour as $N \to \infty$.

Such results have been generalized to families of abelian surfaces (\cite{maulik2022reductions},\cite{shankar2020exceptional}) and K$3$ surfaces (\cite{maulik2022picard},\cite{shankar2022exceptional}). Briefly, these papers consider the intersection of a curve $C$ with a "special divisor" $D$ in a Shimura variety and show that the number of distinct points in the intersection $C \cap T_N(D)$ grows without bound for $\{T_N: N \geq 1\}$ a sequence of Hecke translations. To do this, they estimate the local and global intersection numbers $(C \cap T_N(D))_x, C \cap T_N(D)$ as $N \to \infty$ and show that the second dominates the first (uniformly in $x$) by more than a constant factor. These two facts together prove the desired result.

Crucially, they prove the local estimate by appealing to the moduli interpretation of the associated Shimura variety and applying deformation theoretic techniques while the global intersection number computation is related to the Fourier coefficients of a modular form using the "specialness" of $D$.

Despite the proofs relying heavily on such moduli theoretic considerations, these problems can be naturally phrased for arbitrary subvarieties of a Shimura variety. Indeed, this was done by an AIM group working under Shou-Wu Zhang (\href{https://aimath.org/pastworkshops/intersectshimuraV.html}{Section 3.4 of the report}) and verbally communicated to the author by Ananth Shankar.

\begin{conjecture}\label{AIMconjecture}
Let $\mathcal S$ denote a simple Shimura variety (over some base), and suppose that $V,W$ are generically ordinary subvarieties having complementary dimension. Then the set of points in $V$ isogenous to some point of $W$ is Zariski-dense in $\mathcal S$. Further, the subset of $V\times W$ consisting of pairs of isogenous points is dense in $V\times W$.
\end{conjecture}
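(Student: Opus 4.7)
The plan is to establish Conjecture~\ref{AIMconjecture} in the case $\mathcal{S} = Y(1)^n$ treated in this paper. The isogeny relation on $Y(1)^n \times Y(1)^n$ decomposes as $\bigcup_N T_N$, where $N = (N_1, \dots, N_n)$ ranges over multi-indices and $T_N = T_{N_1} \times \cdots \times T_{N_n}$ is the product of Hecke correspondences in each factor; the coordinate-wise isogenous pairs in $V \times W$ are precisely $(V \times W) \cap \bigcup_N T_N$. I will show this intersection is dense in $V \times W$ by iteratively slicing one coordinate at a time and applying the theorem of Charles at each step.

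For the base case $n = 2$, with $V, W$ curves in $Y(1)^2$ having non-constant projection to each factor, view $V \times W$ as a surface in $Y(1)^4$. For each $N_1 \ge 1$, define the curve $D_{N_1} := (V \times W) \cap \pi_{13}^{-1}(T_{N_1}(\Delta))$, where $\pi_{13} : Y(1)^4 \to Y(1)^2$ projects onto $(v_1, w_1)$. The union $\bigcup_{N_1} D_{N_1}$ is Zariski-dense in $V \times W$, since $\bigcup_{N_1} T_{N_1}(\Delta)$ is Zariski-dense in $Y(1)^2$ and $\pi_{13}$ is dominant on $V \times W$ by hypothesis. On a generic irreducible component of $D_{N_1}$, the maps $D_{N_1} \to Y(1)$ sending $(v, w) \mapsto v_2$ and $(v, w) \mapsto w_2$ are both non-constant (the coordinate-projection hypothesis on $V, W$ persists after one cut), so Charles applied to $D_{N_1} \to Y(1)^2$ produces a set of points with $v_2 \sim w_2$ that is dense in $D_{N_1}$; the union over $N_1$ gives density of isogenous pairs in $V \times W$.

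For general $n$, iterate the construction: set $X_0 := V \times W$ and $X_i := X_{i-1} \cap (\pi_i \times \pi_i)^{-1}(T_{N_i}(\Delta))$, of dimension one less than $X_{i-1}$. The union over $N_i$ of the $X_i$ is Zariski-dense in $X_{i-1}$ provided the projection $X_{i-1} \to Y(1)^2$ onto $(v_i, w_i)$ is dominant. After $n$ cuts, $X_n$ is zero-dimensional, consisting of the coordinate-wise isogenous pairs of Hecke type $N = (N_1, \dots, N_n)$; collecting over all multi-indices $N$ gives a dense subset of $V \times W$, and projecting to $V$ yields the density statement in $V$.

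The principal obstacle is controlling the residual projections $X_{i-1} \to Y(1)^2$ in coordinate $i$ throughout the iteration: they must remain dominant for the density step to succeed. This is where the paper's hypotheses on the dimensions of coordinate projections of $V, W$ enter, and the unlikely-intersection constructions in Theorems~\ref{lem: postive low dim example} and~\ref{thm: ex2} demonstrate that without such control, isogenous pairs can fail to be dense or even infinite, so these hypotheses are necessary. A secondary technical issue is extending Charles's theorem from curves to the higher-dimensional bases $X_{i-1}$ that appear after the first step; one handles this by sweeping $X_{i-1}$ by a generic one-parameter family of curves, each mapping non-constantly to both factors of the relevant $Y(1)^2$, applying Charles to each curve, and letting the family vary to obtain density in $X_{i-1}$.
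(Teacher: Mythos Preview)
First, a framing point: the statement you are addressing is a \emph{conjecture} in the paper, not a theorem. The paper does not prove it in general; it proves the special case $\mathcal{S}=Y(1)^n$ (Theorem~\ref{thm: main, diag version} and Theorem~\ref{cor: two varieties}), and since $Y(1)^n$ is not simple, extra hypotheses on the coordinate projections of $V,W$ are imposed. Your proposal correctly restricts to this case, so the comparison below is against the paper's proof of Theorem~\ref{thm: main, diag version}.

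Your overall strategy --- cut $V\times W$ by pulling back Hecke correspondences one coordinate at a time --- is exactly the paper's strategy. The paper phrases it as an induction on $n$: it slices $Y=V\times W$ along $\pi_1$, obtaining divisors $D_r$, and then replaces $Y$ by $\widehat\pi_1(D_r)\subset X_{n-1}^2$, reducing to the $(n-1)$-dimensional problem. Your iteration $X_i=X_{i-1}\cap(\pi_i\times\pi_i)^{-1}(T_{N_i}(\Delta))$ is the same process viewed inside the fixed ambient space. So the approach is not genuinely different.

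Where your proposal falls short is precisely at what you call the ``principal obstacle'': ensuring that after each slice the remaining coordinate projections stay dominant (and, over a finite field, land in a \emph{good} locus). You assert that ``the coordinate-projection hypothesis on $V,W$ persists after one cut'' and that ``this is where the paper's hypotheses\ldots enter'', but you give no argument. This is the entire content of Lemmas~\ref{lem: F_r has good projections}, \ref{lem: Dk dim drop by 1}, and~\ref{lem: F_r have enough dimension} in the paper, and they are not automatic: the proof that $\pi_i(D_r)$ remains good uses that the Hecke curves $\Gamma_r$ are ample in $X_1^2$, together with a contradiction argument showing that if infinitely many slices became bad then some $\pi_{\{1,i\}}(Y)$ would have dimension $<2$, violating the ``enough dimension'' hypothesis. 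Similarly, the dimension control after slicing requires a case analysis on the generic fibre dimension of $\pi_I|_Y$. Without these lemmas your iteration can collapse, and you have not supplied substitutes.

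Two smaller points. First, you invoke ``Charles'' uniformly, but Charles's theorem (Theorem~\ref{thm: Charles}) is specific to $S=\Spec\mathscr O_k$; over a finite field the base case is Chai--Oort (Theorem~\ref{thm: chai-oort}), and over a number field it is the Green/Oguiso result (Theorem~\ref{thm: Good in char 0 fn field}). Second, your ``secondary technical issue'' of applying the $n=1$ result to higher-dimensional $X_{i-1}$ by sweeping with curves is unnecessary in the paper's setup: because the induction genuinely reduces to $X_{n-1}^2$, the base case is only ever invoked on the one-dimensional projection $\pi_1(Y)\subset X_1^2$, never on a higher-dimensional base.
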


In this generality, the conjecture is known in very few cases. Perhaps most significantly, it is known in complete generality in the geometric case, i.e., over $\mathbb C$ by \cite{tayou2021equidistribution}. The aforementioned work \cite{charles2018exceptional} deals with the case where $\mathcal S = Y(1)$ relative to $\mathscr O_k$, the ring of integers of a number field (using the same strategy of proof as described before). In all other cases, it seems very hard to directly extend the older methods of proof to tackle the above conjecture when $V,W$ are allowed to be arbitrary subvarieties. 

Nevertheless, in this paper we prove the conjecture for $\mathcal S = Y(1)^n$ (parametrizing families of completely split $n$-dimensional abelian varieties). For $S$ the spectrum of a finite field, number field or the ring of integers in a number field (which covers most of the cases of interest), we consider the Shimura variety relative to $S$, denoted by $Y(1)^n_{S}$. In this case, two field valued points $x,y \in Y(1)^n$ are isogenous to each other if the corresponding abelian varieties $A_x,A_y$ parameterized by $x,y$ are geometrically isogenous to each other.

Since $Y(1)^n$ is not simple, some care has to be taken in formulating the conjecture. For $I \subset \{1,2,\dots,n\}$, let $\p_I: Y(1)^n \to Y(1)^I$ denote the projection map onto the corresponding co-ordinates. An obstruction to Conjecture \ref{AIMconjecture} comes from considering the same conjecture for $\p_I(V),\p_I(W) \subset Y(1)^I$. Another obstruction comes from the existence of supersingular elliptic curves over a finite field.

\theoremstyle{plain}
\newtheorem*{thm: main theorem intro}{Theorem \ref{cor: two varieties}}
\begin{thm: main theorem intro}
Let $V,W \subset Y(1)^n_S$ be (geometrically integral) varieties such that for every $I \subset \{1,\dots,n\}$ and $\p_I: Y(1)^n \to Y(1)^{|I|}$ the projection onto the factors indexed by $I$, we have
\[\dim(\p_I(V))+\dim(\p_I(W)) \geq \dim Y(1)^{|I|}\]
(where dimension means absolute dimension in all cases). Moreover if $S = \operatorname{Spec}\mathbb F_q$, we also suppose that $\p_i(V)$ has a dense set of points isogenous to some point on $\p_i(W)$ for all $i \leq n$. This is satisfied if one of  $\pi_i(V),\pi_i(W)$ is not equal to a supersingular point or alternatively, if both $\pi_i(V),\pi_i(W)$ are equal to a supersingular point.

Then the subset of points on $V\times W$ consisting of pairs of isogenous points is dense in $V\times W$.
\end{thm: main theorem intro}

Our proof of the above theorem will be inductive. The idea behind the proof is explained at the beginning of Section \ref{sec: proof main thm} after establishing the required notation. We have the following two corollaries of the above theorem in which we can drop the condition on the dimensions of the projections.

\newtheorem*{cor: dci}{Corollary \ref{cor: divisor curve intersection}}

\begin{cor: dci}
Suppose $D \subset Y(1)^n_S$ is a variety of absolute dimension $n-1$ (or a divisor) and $C \subset Y(1)^n_S$ is a curve relative to $S$ (or respectively of absolute dimension $1$) such that the projections $\p_i(D)\times_S\p_i(C) \subset Y(1)^2_S$ have dense intersection with Hecke translates of the diagonal. 

Then the subset of points on $C\times_S D$ consisting of pairs of isogenous points is dense in $C\times_S D$.
\end{cor: dci}

As a concrete example of this corollary, one could consider a non-isotrivial product of elliptic curves $\mathcal E_1\times\dots\times \mathcal E_n$ over a curve $C/\mathbb F_q$. Then, for any non-constant polynomial $F(t_1,\dots,t_n)$ in $n$ variables, the corollary shows that there are infinitely many points $P \in C(\overline{\mathbb F}_q)$ such that $F(j(\mathcal E_{1,P}),\dots,j(\mathcal E_{n,P})) = 0$.

\newtheorem*{cor: k isogeny classes}{Corollary \ref{rmk: k isogeny classes}}

\begin{cor: k isogeny classes}
Suppose $V \subset Y(1)^n_S$ is a variety of absolute dimension $d$ and let $Z \subset V$ be the set of points $v\in V$ such that $\{t_1(v),\dots,t_n(v)\}$ define at most $n-d$ distinct isogeny classes. If $S$ is the spectrum of a finite field, suppose further that $\p_i(V)$ is not a supersingular point. 

Then $Z$ is dense in $V$.
\end{cor: k isogeny classes}

\begin{notation}
In this paper, $S$ will denote the spectrum of a finite field, number field or the ring of integers of a number field. A variety $X \to S$ is defined to be a finite-type scheme such that the structure map is dominant when restricted to any irreducible component of $X$. The dimension of $X$ is always considered to be the \emph{absolute} dimension so that $\operatorname{Spec} \mathbb Z$ has dimension 1. We use $k$ to denote a finite field or number field, excluding the case of the ring of integers of a number field.

Since our results will be invariant under birational equivalence and geometric, i.e., invariant under replacing $S$ by an integral extension $S'$, one can assume that $X$ is smooth or projective and geometrically integral if convenient.
\end{notation}

\begin{notation}
We will also define $X_{n,S} = X(1)^n_S$, the n-fold product of the modular curve relative to $S$ and sometimes drop the subscript $S$ if it is clear from the context. For psychological reasons, we prefer to work with $X_n$ instead of $Y(1)^n$ - the difference is immaterial since our results are invariant under birational isomorphisms. Since our questions are geometric, there is also no loss of generality in assuming that $X(1)$ is the associated coarse moduli space - i.e., isomorphic to $\mathbb P^1$. \emph{We say that two points $x,y \in X_n$ are isogenous if the corresponding semi-abelian varieties $A_x,A_y$ parametrized by $x,y$ are geometrically isogenous.}
\end{notation}

\subsection{Acknowledgements} To be filled in after the referee process is completed.

\section{Unlikely intersections in high codimensions}\label{sec: low dim case}

In this section, we consider the following question.

\begin{question}
Suppose $V,W \subset X_n$ such that $\dim(V)+\dim(W) < n$. Is $\overline{\I(V,W)} \subset V$ Zariski-dense?
\end{question}

In the number field setting, if one of $V,W$ is special, we expect a negative answer to the above result by the Zilber-Pink conjecture\cite[Conjecture 2.2]{habegger2016minimality}. \textit{We therefore restrict henceforth to the case of $k = \mathbb F_q$ a finite field.} Surprisingly, we find families of examples providing a positive answer to the above question. These examples seem to have a very arithmetic flavour, distinct from the case where the dimensions are complementary.

\begin{theorem}\label{lem: postive low dim example}
Let $V = (t^{a_1},\dots,t^{a_n})$ and $W = (s^{b_1},\dots,s^{b_n})$ (for $a_i,b_i \geq 2$ and without loss of generality, coprime to $p$) be two rational curves in $X_{n,S}$ for $S = \operatorname{Spec} \mathbb F_q$. 

If the numbers $b_i/a_i$ are all powers of a fixed rational number $\gamma$, then $\I(V,W)$ is an infinite set.

Moreover, if we assume Artin's conjecture on primitive roots, then $\I(V,W)$ is infinite for general $a_i,b_i$.
\end{theorem}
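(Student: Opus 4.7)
The plan is to exhibit, for each prime $\ell$ in a suitable infinite set, an isogenous pair by setting $t = s = \zeta$, a primitive $\ell$-th root of unity in $\overline{\mathbb F_q}$. The driving geometric input is that over $\overline{\mathbb F_p}$, any two elliptic curves whose $j$-invariants are related by $j_2 = j_1^{p^m}$ are isogenous, via the iterated Frobenius isogeny $E_{j_1}\to E_{j_1}^{(p^m)}\cong E_{j_2}$. Consequently, once $\ell>\max(a_i,b_i)$ and one has $b_i/a_i\equiv p^{m_i}\pmod\ell$ for some integers $m_i$, the relation $\zeta^{b_i}=(\zeta^{a_i})^{p^{m_i}}$ forces $E_{\zeta^{a_i}}$ and $E_{\zeta^{b_i}}$ to be geometrically isogenous for every coordinate $i$. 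The points $(\zeta^{a_1},\dots,\zeta^{a_n})\in V$ and $(\zeta^{b_1},\dots,\zeta^{b_n})\in W$ are then coordinatewise isogenous, and distinct primes $\ell$ produce distinct points in $V$ because the first coordinate has exact multiplicative order $\ell$. The theorem is thus reduced to producing infinitely many such primes.

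Under the first hypothesis $b_i/a_i=\gamma^{k_i}$, it is enough to arrange that $\gamma$ itself lies in the subgroup $\langle p\rangle\subset(\mathbb Z/\ell)^\times$, since $\langle p\rangle$ is then closed under integer powers and contains every $\gamma^{k_i}$. Writing $\gamma=u/v$ in lowest terms, I would consider the integer sequence $N_k:=p^k v-u$: any prime $\ell\ne p$ dividing some $N_k$ automatically satisfies $p^k\equiv \gamma\pmod\ell$, so $\gamma\in\langle p\rangle$. An elementary argument then shows that $\{N_k\}_{k\ge 1}$ has infinitely many distinct prime divisors: for any fixed prime $\ell\ne p$, the residues $N_k\bmod\ell^m$ are eventually periodic in $k$, so $v_\ell(N_k)$ is bounded; and $v_p(N_k)=v_p(u)$ for $k$ large. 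Since $|N_k|\to\infty$, no finite set of primes can account for every $N_k$, which produces the desired infinite family of $\ell$.

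For the unconditional-in-$\gamma$ statement, Artin's conjecture provides infinitely many primes $\ell$ for which $p$ is a primitive root modulo $\ell$; for any such $\ell>\max(a_i,b_i)$, $\langle p\rangle$ is all of $(\mathbb Z/\ell)^\times$, so every $b_i/a_i$ is automatically a power of $p$ modulo $\ell$, and the same roots-of-unity construction yields an infinite family of isogenous pairs in $\I(V,W)$. The one genuinely nonroutine point is guaranteeing $\gamma\in\langle p\rangle$ modulo infinitely many primes without Artin; this is exactly what the ``powers of $\gamma$'' hypothesis buys, reducing an otherwise Chebotarev-flavored problem to the Zsygmondy-style statement about prime divisors of $p^k v-u$. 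Without that hypothesis I do not see how to circumvent Artin's conjecture — that is the true obstacle and the reason the two cases of the theorem are cleanly separated.
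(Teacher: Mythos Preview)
Your approach is essentially the paper's: set $t=s$ equal to a root of unity and use Frobenius isogenies to reduce everything to the congruence condition $b_i/a_i\in\langle p\rangle\pmod\lambda$, then treat the two hypotheses exactly as you describe. The Artin-conjecture case is fine.

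There is, however, a genuine slip in the $\gamma$-case. Your claim that $v_\ell(N_k)$ is bounded in $k$ for each fixed prime $\ell\ne p$ is false. Take $p=2$, $u=5$, $v=1$, $\ell=3$: then $N_5=27$ has $v_3=3$, while $2^{23}\equiv 5\pmod{81}$ gives $v_3(N_{23})\ge 4$, and since $2$ is a primitive root modulo every power of $3$ the valuation $v_3(2^k-5)$ is in fact unbounded. Periodicity of $N_k\bmod\ell^m$ for each \emph{fixed} $m$ says nothing about what happens as $m$ grows with $k$.

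The repair is painless and is exactly what the paper does: drop the requirement that $\lambda$ be prime and simply take $\lambda_k=|N_k|=|p^kv-u|$. One checks $\gcd(\lambda_k,pv)=1$, so a primitive $\lambda_k$-th root of unity exists in $\overline{\mathbb F_p}$ and $\gamma\equiv p^k\pmod{\lambda_k}$; since $\lambda_k\to\infty$ the resulting points on $V$ have first coordinate of unbounded multiplicative order and are therefore infinite in number. If you prefer to keep primes, observe that under a finitely-many-primes hypothesis the congruence $N_k\equiv N_1\pmod{\prod_\ell \ell^{v_\ell(N_1)+1}}$ holds for infinitely many $k$ by periodicity and forces $|N_k|=|N_1|$ along that arithmetic progression, a contradiction.
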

\begin{proof}
We will restrict ourselves to isogenies of degree $p^n$ and also suppose that $t=s$. For $t \in \overline{\mathbb F}_p^\times$, $(t^{a_1},\dots,t^{a_n})$ is isogenous to $(t^{b_1},\dots,t^{b_n})$ through a $p$-power isogeny if there exist $N_1,\dots,N_n \in \mathbb N$ such that
\[t^{a_ip^{N_i}} = t^{b_i} \iff t^{a_ip^{N_i} - b_i} = 1.\]
Suppose $t$ has multiplicative order $\lambda$. Then the above is equivalent to requiring that
\[a_i/b_i \equiv p^{N_i} \pmod \lambda.\]
Said another way, we are interested in $\lambda$ such that $a_i/b_i$ lie in the subgroup of $(\mathbb Z/\lambda\mathbb Z)^\times$ generated by $p$. Recall Artin's conjecture which guarantees us that for any prime $p$, there exist infinitely many primes $\lambda$ such that $p$ is a primitive generator modulo $\lambda$. For all such $\lambda$, $a$ and $b$ are certainly powers of $p \pmod \lambda$ and moreover, we can find a $\lambda$ root of unity in $\overline{\mathbb F}_p$ as long as $\lambda$ is coprime to $p$.

In the case where $b_i/a_i=\gamma^{d_i}$ for some $\gamma$ (and all $i$), it is sufficient to require that $\gamma$ is in the subgroup generated by $p \pmod{\lambda}$ (and we no longer require $\lambda$ to be prime). We can obtain such $\lambda$ by considering the divisors of $p^m-\gamma$ as $m\to\infty$ and noting that since $|p^m-\gamma| \to \infty$, the number of distinct divisors also necessarily has to go to infinity.
\end{proof}

\begin{remark}
We note the striking fact that $V$ and $W$ are curves independent of the dimension $n$. That is, we have found two curves contained in a space of arbitrarily high dimension that have infinitely many common isogenous points.
\end{remark}

We give another positive example of a similar flavour. 

\begin{theorem}\label{thm: ex2}
Let $C = (t+b_0,t+b_1\dots,t+b_n), \Delta = (s,\dots,s) \subset X_{n+1,S}$ such that the $b_i\in \mathbb F_q$ are pairwise distinct and satisfy:
\[\frac{b_i-b_0}{b_j-b_0} \in \mathbb F_p\]
for all $1 \leq i,j \leq n$. Then there are infinitely many points on $C$ isogenous to a point on $\Delta$.
\end{theorem}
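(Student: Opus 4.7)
The plan is to reduce the problem to finding roots of a suitable Artin--Schreier-type polynomial. First I would translate $t \mapsto t - b_0$ to assume $b_0 = 0$. The hypothesis then says that each $b_i$ lies in $\mathbb{F}_p \cdot b_1$, so I can write $b_i = c_i b_1$ for uniquely determined $c_i \in \{0, 1, \dots, p-1\}$, with $c_0 = 0$ and $c_1 = 1$. The goal is to produce infinitely many $t \in \overline{\mathbb{F}}_p$ for which the $n+1$ values $t+b_0, \dots, t+b_n$ all lie in a single Frobenius orbit over $\mathbb{F}_p$: because elliptic curves whose $j$-invariants are Frobenius conjugates are connected by the Frobenius isogeny $E \to E^{(p)}$ (which sends $j(E)$ to $j(E)^p$), such a $t$ will exhibit the point $(t+b_0,\dots,t+b_n) \in C$ as coordinate-wise isogenous to the point $(t, t, \dots, t) \in \Delta$.

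The heart of the argument is the following construction. Choose $M$ to be a positive multiple of $[\mathbb{F}_q : \mathbb{F}_p]$, so that $b_1^{p^M} = b_1$, and consider
\[
P_M(x) \;=\; x^{p^M} - x - b_1 \;\in\; \mathbb{F}_q[x].
\]
For any root $t \in \overline{\mathbb{F}}_p$ of $P_M$, a one-line induction using $b_1^{p^M} = b_1$ gives
\[
t^{p^{kM}} \;=\; t + k b_1 \qquad \text{for } k = 0, 1, \dots, p-1.
\]
Specialising to $k = c_i$ shows $t + b_i = t^{p^{c_i M}}$, which is exactly the Frobenius-conjugacy statement required. This step is where the hypothesis $(b_i-b_0)/(b_j-b_0) \in \mathbb{F}_p$ is essential: it is what allows each shift $b_i$ to be realized as an integer-in-$\{0,\dots,p-1\}$ multiple of a single element $b_1$, so that iterating the identity $t^{p^M} = t + b_1$ exactly $c_i$ times lands on $t + b_i$ rather than on some unrelated element.

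For the counting, $P_M$ is separable of degree $p^M$, so it has $p^M$ distinct roots in $\overline{\mathbb{F}}_p$, and distinct values of $t$ yield distinct points on $C$. Letting $M$ range over multiples of $[\mathbb{F}_q : \mathbb{F}_p]$ produces unboundedly many such $t$, and hence infinitely many points on $C$ isogenous to a point on $\Delta$. I do not expect any real obstacle: once the right polynomial $P_M$ has been written down the rest is automatic, and the only ``clever'' ingredient is the recognition that the $\mathbb{F}_p$-linearity hypothesis on the $b_i$ is precisely what lets the iterated Frobenius identity close up.
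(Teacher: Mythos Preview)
Your proposal is correct and follows essentially the same approach as the paper: both reduce to the Artin--Schreier equation $t^{q^d} = t + (b_1 - b_0)$ (your $P_M$ with $p^M = q^d$), iterate it using the $\mathbb{F}_p$-linearity hypothesis to realize each $t+b_i$ as a Frobenius conjugate of $t+b_0$, and then let the parameter grow to get infinitely many solutions. The only cosmetic differences are your normalization $b_0=0$ and your counting (you use that $P_M$ has $p^M$ roots with $p^M\to\infty$, while the paper chooses $d$ among powers of $p$ and notes the solutions lie in disjoint field strata $\mathbb{F}_{q^{pd}}\setminus\mathbb{F}_{q^d}$).
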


\begin{remark}
We note that the constraints on the $b_i$ linearly constrain the largest possible $n$ in terms of $p$.
\end{remark}

\begin{proof}
It is sufficient to find $t \in \overline{\mathbb F}_q$ and $u_1,\dots,u_n \in \mathbb N$ such that
\[t + b_0 = t^{q^{u_1}} + b_1 = \dots = t^{q^{u_n}} + b_n\]
or equivalently, for all $i\geq 1$,
\[t^{q^{u_i}} = t + b_i-b_0\]
Let $d \in \mathbb N_{>0},\lambda = b_1-b_0 $ and $s \in \overline{\mathbb F}_q$ be a solution to
\[s^{q^d} = s+\lambda.\]
By assumption, we have
\[\frac{b_i-b_0}{b_1-b_0} \in \mathbb F_p\]
and we define $u_i'$ to be any positive integer that reduces to the above fraction. Then if we let $u_i = du_i'$ we have
\[s^{q^{u_i}} = s + u_i'\lambda = s + b_i-b_0\]
and therefore, such a $s$ provides an example of a solution to the simultaneous equations in $t$ we started off with. 

Finally, we note that any $s$ such that $s^{q^d} = s+\lambda$ belongs to $\mathbb F_{q^{pd}} - \mathbb F_{q^d}$ since $s$ has an orbit of size $p$ under the Frobenius $x \to x^{q^d}$. Therefore, if we let $d = p,p^2,\dots$ range over the powers of $p$, the resulting $s$ will all be pairwise distinct thus proving that there exist infinitely many $t \in \overline{\mathbb F}_{q}$ that give rise to a point on $C$ isogenous to a point on $D$.

\end{proof}

\section{Just-likely intersections in complementary codimension}

Let $Z \subset X_{1,S}^2 \cong (\mathbb P^1)^2$ (with co-ordinates $t,s$) be a closed subscheme relative to $S$. Consider the set of points on $Z$ \emph{isogenous} to a point on the diagonal defined by
\[\I(Z,\Delta) = \{z \in Z: t(z) \text{ is isogenous to } s(z)\},\]
i.e., the subset of $z \in Z$ where the two elliptic curves parametrized by $z$ are isogenous. If we define
\[\Gamma_n = \{(t,s) \in X_1^2: t \text{ is isogenous to } s  \text{ by an isogeny with cyclic kernel of order } n\},\]
then
\[\I(Z,\Delta) = \bigcup_{n\geq 1}\Gamma_n\cap Z\]
is a (countable) union of closed subschemes of $Z$ since the $\Gamma_n$ are Hecke translates of the diagonal $\Delta$ and are hence closed.

\begin{definition}
We say that $Z$ is \textit{good} if $\mathscr I(Z,\Delta)$ is dense in $Z$ (and \textit{bad} otherwise).
\end{definition}

We can use a result of Chai-Oort \cite[Proposition 7.3]{chai-oort} to classify the good subschemes $Z$ over $S = \operatorname{Spec} \mathbb F_q$.

\begin{theorem}\label{thm: chai-oort}[Chai-Oort]
An irreducible $Z \subset X_{1,S}^2$ is good exactly in the following cases.
\begin{enumerate}
    \item $\dim(Z) = 0$. $Z = \{z\}$ and $t(z)$ is isogenous to $s(z)$.
    \item $\dim(Z) = 1$. $Z$ is not of the form $X_1\times\{p\}$ or $\{p\}\times X_1$ for $p$ corresponding to a supersingular elliptic curve.
    \item $\dim(Z) = 2$. $Z = X_1^2$. 
\end{enumerate}
\end{theorem}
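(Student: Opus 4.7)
The plan is to dispatch the three dimensional strata separately; the one-dimensional case carries the content, while the other two are essentially formal. The case $\dim Z = 0$ is immediate, since an irreducible zero-dimensional closed subscheme is a single closed point $z$, and by definition it is good precisely when $t(z)$ is isogenous to $s(z)$. For $\dim Z = 2$, irreducibility of $Z$ together with $\dim X_{1,S}^2 = 2$ forces $Z = X_{1,S}^2$; density of $\I(X_{1,S}^2,\Delta)$ then follows from the decomposition $\I(X_{1,S}^2,\Delta) = \bigcup_{n\geq 1}\Gamma_n$ introduced at the beginning of the section, since an infinite union of distinct irreducible curves in the irreducible surface $X_{1,S}^2$ cannot be contained in any proper closed subvariety.

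The main case is $\dim Z = 1$, which I would split into three subcases. First, if $Z = X_1\times\{p\}$ or $\{p\}\times X_1$ with $p$ supersingular, then any $(x,p)\in\I(Z,\Delta)$ must have $x$ isogenous to the supersingular curve $p$, hence $x$ itself supersingular; since there are only finitely many supersingular $j$-invariants in $\overline{\mathbb F}_q$, $\I(Z,\Delta)$ is finite and $Z$ is bad, exactly as claimed. Second, if $Z = X_1\times\{p\}$ or $\{p\}\times X_1$ with $p$ ordinary, then $\I(Z,\Delta)$ corresponds to the Hecke orbit of $p$ in $X_1$, which is Zariski-dense by the classical density theorem for prime-to-$p$ Hecke orbits of ordinary points. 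Third, if $Z$ is a curve projecting dominantly to both factors of $X_1\times X_1$, then I appeal directly to Chai--Oort's Proposition~7.3 to conclude that $\I(Z,\Delta)$ is infinite, and hence dense in the curve $Z$.

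The main obstacle is verifying that Chai--Oort's Proposition~7.3 covers every irreducible curve $Z$ with dominant projection to both factors, with no further hypotheses. Should their statement require additional conditions --- for example, that $Z$ itself not coincide with a Hecke correspondence $\Gamma_n$ --- such excluded cases are trivially good, since then $\I(Z,\Delta) = Z$ tautologically. Modulo this citation check, the three dimensional cases combine to give exactly the trichotomy in the statement, with badness occurring precisely for supersingular horizontal or vertical lines.
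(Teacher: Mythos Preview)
Your proposal is correct and follows essentially the same approach as the paper's own proof: the dimension $0$ and $2$ cases are disposed of formally (the latter via the infinite family $\{\Gamma_n\}$), and the dimension $1$ case is split into horizontal/vertical lines versus curves with both projections dominant, with the latter handled by Chai--Oort's Proposition~7.3 and the former by the finiteness or infiniteness of the isogeny class of $p$. The only cosmetic difference is that the paper phrases the ordinary case as ``there are infinitely many elliptic curves isogenous to an ordinary one'' rather than invoking Hecke-orbit density, and it does not separately discuss the possibility $Z = \Gamma_n$ (which, as you note, is trivially good anyway).
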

\begin{proof}
The dimension $0$ case is trivial. In the dimension $2$ case with $Z = X_1^2$, we have $\Gamma_n \subset \I(Z,\Delta)$ and since the $\Gamma_n$ are all distinct divisors, their closure is all of $X_1^2$.

When $\dim(Z) = 1$, Proposition 7.3 of \cite{chai-oort} proves that $Z$ is good when it is not of the form $X_1\times\{p\}$ or $\{p\}\times X_1$ for $p$ any point. When $Z$ is of this form and $p$ corresponds to an ordinary elliptic curve, we see that $Z$ is good since there are infinitely many elliptic curves isogenous to an ordinary one.

Finally, if $Z$ is of this form and $p$ corresponds to a supersingular elliptic curve, $Z$ is not good since the only elliptic curves isogenous to a supersingular one are the other supersingular curves and there are only finitely many such curves.
\end{proof}

Similarly, over $S = \operatorname{Spec} k$ for $k$ a characteristic $0$ field (in particular a number field), one has:

\begin{theorem}\label{thm: Good in char 0 fn field}
An irreducible $Z\subset X_{1,S}^2$ is good except when $\dim Z = 0$ and $t(z)$ is not isogenous to $s(z)$.
\end{theorem}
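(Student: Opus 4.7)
The plan is to mirror the case analysis of Theorem \ref{thm: chai-oort}, noting that in characteristic zero there are no supersingular elliptic curves, so the one exceptional case of the finite field statement (the horizontal or vertical line through a supersingular $j$-invariant) simply disappears. First I would dispose of $\dim Z = 0$, which is tautological, and $\dim Z = 2$, where $Z = X_{1,S}^2$ and the $\Gamma_n$ form an infinite family of pairwise distinct irreducible divisors whose union is Zariski dense.

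The interesting case is $\dim Z = 1$. If one of the projections $t, s$ is constant, say $Z = X_1 \times \{p\}$, then the good points of $Z$ correspond to elliptic curves geometrically isogenous to $E_p$. In characteristic zero the geometric isogeny class of any elliptic curve is infinite (for example, by forming quotients by cyclic subgroups of prime order $\ell$ for infinitely many primes $\ell$), so the good locus is an infinite subset of the curve $Z$ and is therefore Zariski dense.

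The remaining subcase is when both projections $t, s: Z \to X_1$ are non-constant, so $Z$ dominates both factors. This is the complementary-dimension situation for $\mathcal S = Y(1)$ and is exactly the characteristic zero analogue of \cite[Proposition 7.3]{chai-oort}. The most efficient route is to fix an embedding $k \hookrightarrow \mathbb C$ (density is preserved under faithfully flat base change) and invoke Tayou's equidistribution result \cite{tayou2021equidistribution}; alternatively, one spreads $Z$ out to a curve over the ring of integers of a number field of definition and appeals to the theorem of Charles \cite{charles2018exceptional} (and its extensions to families). Either input yields infinitely many $z \in Z(\bar k)$ with $t(z)$ isogenous to $s(z)$, which is automatically Zariski dense in the one-dimensional $Z$. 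The main obstacle is precisely this citation: everything else is formal and parallels the proof of Theorem \ref{thm: chai-oort}.
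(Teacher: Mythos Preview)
Your case analysis is correct and more explicit than the paper, which disposes of the theorem in a single sentence. The difference lies in the citation for the core $\dim Z = 1$ case with both projections dominant: the paper invokes the density of the Noether--Lefschetz locus in a non-trivial one-parameter variation of Hodge structures, due to Green \cite[Proposition 17.20]{voisin2002theorie} and Oguiso \cite{oguiso2003local}, applied to the family $E_{t(z)} \times E_{s(z)}$ over $Z$. Your primary citation of Tayou \cite{tayou2021equidistribution} also works and is strictly more general; the Green--Oguiso input is older and more elementary, requiring only a local analysis of the period map rather than an equidistribution statement.

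Your alternative via Charles, however, does not work as stated. Spreading $Z$ out over $\mathscr O_k$ and applying \cite{charles2018exceptional} (or any family extension of it) produces infinitely many closed points in the \emph{special fibers} of the spread-out surface where the two reductions are isogenous; these live in positive characteristic and say nothing about points of $Z(\bar k)$ in characteristic zero. An isogeny between reductions does not in general lift. So drop that alternative and keep either Tayou or, matching the paper, Green--Oguiso.
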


The above statement follows from the analogous statement for the Noether-Lefschetz locus in a non trivial, one parameter variation of Hodge structures due independently to Green \cite[Proposition 17.20]{voisin2002theorie} and \cite{oguiso2003local}.

Finally, in the case of $S$ the spectrum of the ring of integers of a number field $k$, Francois Charles \cite{charles2018exceptional} proves the following theorem.

\begin{theorem}\label{thm: Charles}
For any two (generalized) elliptic curves $\mathcal E_1,\mathcal E_2$ relative to $S$, the spectrum of a ring of integers of a number field $k$, there exist infinitely many primes $\mathfrak p \subset k$ such that $\mathcal E_1\pmod{\mathfrak p}$ is isogenous to $\mathcal E_2\pmod{\mathfrak p}$.
\end{theorem}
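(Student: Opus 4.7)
The plan is to reformulate the statement as an arithmetic intersection problem on $X(1)^2_S$ and play a global growth estimate against a uniform local bound, in the spirit of the Gross--Zagier formula. The pair $(\mathcal E_1,\mathcal E_2)$ defines a section $S \to X(1)^2_S$, whose image is an arithmetic curve $C$. For each $N\geq 1$, the Hecke correspondence $T_N \subset X(1)^2$ parametrizing cyclic $N$-isogenies is a horizontal divisor, and a closed point $\mathfrak p$ of $S$ lies below a point of $C\cap T_N$ precisely when $\mathcal E_1\bmod\mathfrak p$ is isogenous to $\mathcal E_2\bmod\mathfrak p$. The goal is to show that, as $N$ varies, these intersections cannot all be concentrated on a finite set of primes.

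First I would equip $X(1)^2_S$ with the Petersson/Poincar\'e metric at the archimedean places to form an Arakelov-style arithmetic intersection pairing $\widehat{C\cdot T_N}$. The divisors $T_N$ are modular: their generating series in $N$ is essentially a modular form, and the arithmetic degree $\widehat{\deg}(T_N|_C)$ can be bounded below by an expression (involving Fourier coefficients of an Eisenstein series or theta lift, plus archimedean Green's function contributions controlled by known Faltings height bounds at CM points) that grows at least like a positive power of $N$ as $N \to \infty$.

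Next, for each finite place $\mathfrak p$, I would bound the local intersection multiplicity $(C\cdot T_N)_{\mathfrak p}$ uniformly in $N$. This is the crux. At an ordinary prime, the Serre--Tate theorem identifies the formal deformation space of the pair of elliptic curves with a formal multiplicative group, and the condition for a fixed $N$-isogeny to lift cuts out an explicit formal subscheme; its intersection with the (formal) arithmetic curve $C$ is then controlled by how far $\mathcal E_1,\mathcal E_2$ deviate from their canonical lifts at $\mathfrak p$, yielding a bound that is subpolynomial in $N$. At supersingular primes one replaces the canonical lift by Gross's quasi-canonical lifts and argues similarly on the Lubin--Tate side. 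Combining these inputs: if only finitely many primes $\mathfrak p$ ever saw an isogeny, summing the local bounds would give an upper estimate for $\widehat{C\cdot T_N}$ that is dominated by the global lower bound for $N$ large, a contradiction. The main obstacle is precisely this uniform local estimate, which requires delicate $p$-adic deformation theory for pairs of elliptic curves together with their isogenies, and to a lesser extent the careful control of the archimedean Green's function contributions so that the global lower bound is not swallowed up.
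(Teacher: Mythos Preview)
The paper does not actually prove this theorem: it is quoted as a result of Charles \cite{charles2018exceptional} and used as a black box (as the base case for the $\mathscr O_k$ induction in Theorem~\ref{thm: ring of integers of a number field case}). So there is no ``paper's own proof'' to compare your proposal against.

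That said, your outline is a faithful sketch of Charles's actual argument: reinterpret the pair $(\mathcal E_1,\mathcal E_2)$ as an arithmetic curve in $X(1)^2_S$, compute the arithmetic intersection with the Hecke divisors $T_N$, bound the global intersection below via modularity (Fourier coefficients of an Eisenstein series together with archimedean Green function estimates and Faltings height bounds), bound the local intersection at each finite place above using Serre--Tate theory at ordinary primes and Gross's quasi-canonical lifts at supersingular primes, and derive a contradiction if only finitely many primes occur. You have correctly identified the uniform local bound as the delicate point. As a proof \emph{proposal} this is on target, but be aware that each of these steps---especially the supersingular local estimate and the archimedean analysis---is substantial, and your write-up is at the level of a roadmap rather than a proof; in the context of the present paper, a citation to Charles is all that is expected.
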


In this paper, we consider a higher dimensional version of the above questions. For $V,W \subset X_{n,S}$, we define
\[\I(V, W) = \{v \in V: \text{there exists } w\in W \text{ such that } v \text{ is isogenous to } w \} \subset V.\]
\begin{question}\label{ques: pair of varieties}
Suppose $\dim(V) + \dim(W) \geq \dim(X_n)$. Is $\I(V,W)$ dense in $V$?
\end{question}
As stated, the question might have a negative answer because the projections of $V,W$ to a proper subset of the co-ordinates might have very low dimension. The question might also have a negative answer because the projections of $V,W$ to any of the $X_1$ factors might fail to have dense intersection (up to Hecke translation). We show that these are the only two obstructions. More precisely, let us consider the following setup.

Consider $X_{n,S}^2$ with co-ordinates $t_1,\dots,t_n,s_1,\dots,s_n$ and for any $I = \{i_1,\dots,i_r\} \subset \{1,2,\dots,n\}$, define the projection
\[\pi_I : X_{n,S}^2 \to X_{I,S}^2, P \to (t_{i_1}(P),\dots,t_{i_r}(P),s_{i_1}(P),\dots,s_{i_r}(P)).\]
For $J = \{1,2,\dots,n\}-I$, we set $\wpi_I = \pi_J$. Let $Y \subset X_{n,S}^2$ be a subvariety with the following properties:
\begin{enumerate}\label{list: suff properties}
    \item \underline{Y has "good projections".} The image $\pi_i(Y)$ is good for all $i=1,\dots,n$.
    \item \underline{Y has "enough dimension".} The image $\pi_I(Y)$ has (absolute) dimension at least $|I|$ for all subsets $I$ of $\{1,\dots,n\}$ with $|I| \geq 2$.
\end{enumerate}

Then if we let
\[\Delta = V(t_1=s_1,\dots,t_n=s_n) \cong X_n \subset X_n^2  \]
and
\[\I(Y,\Delta) = \{x \in Y : t_i(x) \text{ is isogenous to } s_i(x) \text{ for all } i\}\subset Y\]
we have
\begin{theorem}\label{thm: main, diag version}
For $Y$ satisfying the above properties, $\I(Y,\Delta)$ is dense in $Y$. We note that the condition on having "good projections" is automatic for $S$ the spectrum of the ring of integers in a number field.
\end{theorem}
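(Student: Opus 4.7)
The plan is to prove Theorem \ref{thm: main, diag version} by induction on $n$. The base case $n = 1$ is immediate: the good-projections hypothesis reduces to saying $Y = \pi_1(Y) \subset X_{1,S}^2$ is good, which by definition means $\I(Y,\Delta)$ is dense in $Y$.

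For the inductive step, I would pass to the projection $\phi : Y \twoheadrightarrow Y' := \pi_{\{1,\dots,n-1\}}(Y) \subset X_{n-1,S}^2$ onto the first $n-1$ coordinate pairs. Both hypotheses transfer to $Y'$ verbatim, since $\pi_I(Y') = \pi_I(Y)$ for every $I \subset \{1,\dots,n-1\}$. The inductive hypothesis then yields that $\I(Y',\Delta')$ is dense in $Y'$, and by generic flatness $\phi$ is flat (hence open) over a dense open $V' \subset Y'$, so $\phi^{-1}\bigl(V' \cap \I(Y',\Delta')\bigr)$ is dense in $Y$.

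It now remains to upgrade this density from ``first $n-1$ pairs isogenous'' to ``all $n$ pairs isogenous''. Concretely, for $P$ in a dense subset of $\I(Y',\Delta')$, I need a point $y \in \phi^{-1}(P)$ also satisfying $t_n(y) \sim s_n(y)$. Passing under $\pi_n$, this reduces to the central technical lemma:

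\emph{For $P$ in a dense open subset of $Y'$, the image $F_n := \pi_n(\phi^{-1}(P)) \subset X_{1,S}^2$ is good.}

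Granted the lemma, one applies goodness of $F_n$ to pull back a dense set of isogeny points to $\phi^{-1}(P)$, and spreads density back to $Y$ via flatness and constructibility, finishing the induction.

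The main obstacle is proving the lemma, which I would attack by dimension analysis of $F_n$. When $\dim F_n = 2$, the image contains a Zariski open of $X_1^2$ and is automatically good. When $\dim F_n = 1$, $F_n$ is a curve in $X_1^2$ and I would rule out the pathological supersingular lines $X_1 \times \{p\}$ and $\{p\} \times X_1$ by noting that if generic fibers all landed on such a line, then $\pi_n(Y)$ itself would be swept out inside that line, contradicting the hypothesis that $\pi_n(Y)$ is good. When $\dim F_n = 0$, the morphism $\phi$ is generically finite and $F_n$ is a single point $(a,b)$; here I would argue, using the enough-dimension hypothesis for pairs $\{i,n\}$ together with goodness of $\pi_n(Y)$, that the assignment $P \mapsto (a,b)$ is essentially the map $\psi : Y' \to \pi_n(Y)$, and density of the isogeny locus in $\pi_n(Y)$ transfers back through $\psi$ to produce many $P \in \I(Y',\Delta')$ with $a \sim b$. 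The $0$-dimensional fiber case is the most delicate and is where I expect the subtlest book-keeping of the two hypotheses.
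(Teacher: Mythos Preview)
Your overall inductive scheme---project away one coordinate pair, apply induction, then handle the remaining pair fiberwise---is exactly what the paper does in the $\mathscr{O}_k$ case (Theorem~\ref{thm: ring of integers of a number field case}), and there it works cleanly because Charles's theorem makes \emph{every} point of $X_1^2$ good, so your lemma is automatic in all fiber dimensions. Over a field, however, your $\dim F_n = 0$ case has a real gap.

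When $\phi$ is generically finite, your lemma as stated (``for $P$ in a dense open of $Y'$, $F_n$ is good'') is simply false: goodness of a point $(a,b)$ means $a$ is isogenous to $b$, and this is never a Zariski-open condition. What you actually need is that the two countable dense loci
\[
\I(Y',\Delta') \subset Y' \qquad \text{and} \qquad \psi^{-1}\bigl(\I(\pi_n(Y),\Delta)\bigr) \subset Y'
\]
have dense intersection. Each is a countable union of proper closed subvarieties, and the intersection of two such dense unions need not be dense. Your phrase ``density of the isogeny locus in $\pi_n(Y)$ transfers back through $\psi$'' does not address this: you would have to show, for instance, that for most irreducible components $Z$ of $\I(Y',\Delta')$, the image $\psi(Z) \subset \pi_n(Y)$ is itself good (not contracted to a bad point, not a supersingular line). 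But you have no direct control over the components $Z$---they are produced by the inductive black box, and nothing in the inductive \emph{statement} tells you they behave well under the extraneous map $\psi$. Invoking the enough-dimension hypothesis for pairs $\{i,n\}$ does not obviously rescue this.

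The paper sidesteps this difficulty by running the induction in the opposite order. It first slices $Y$ along the isogeny condition in the \emph{first} coordinate, producing explicit divisors $D_r = \pi_1^{-1}(E_r) \cap Y$ with $E_r$ either a good point on a curve or a Hecke curve $\Gamma_r$ in $X_1^2$. Because the $D_r$ are defined geometrically (as preimages of concrete divisors), one can verify directly---using ampleness of $\Gamma_r$ and the enough-dimension hypothesis---that $\wpi_1(D_r)$ inherits both ``good projections'' and ``enough dimension'' (Lemmas~\ref{lem: F_r has good projections}--\ref{lem: F_r have enough dimension}). Only then is induction applied, as a black box, to each $\wpi_1(D_r)$. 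The point is that the conditions to be checked on the slice are \emph{geometric} (dimension, avoiding finitely many bad curves), not the \emph{arithmetic} condition of a second isogeny, so no simultaneous-density problem arises.
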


\begin{remark}
As discussed above, it is necessary that $Y$ have "good projections" so that $\I(Y,\Delta)$ is not contained in a proper closed subscheme of $Y$. Similarly, it's also necessary that $\I(\pi_I(Y),\Delta)$ be dense in $\pi_I(Y)$. While we don't quite require this, we require the stronger (and easier to check condition) of $Y$ having "enough dimension" so that we can argue inductively.
\end{remark}

\begin{notation}
We note that for $I \subset \{1,\dots,n\}$, we use $\pi_I: X_n^2 \to X_{I}^2$ and $p_I: X_n \to X_I$. Similarly, when we have two families of products of elliptic curves in mind, we use $X_n^2$ and if we have one family in mind, we use $X_n$.
\end{notation}

We prove Theorem \ref{thm: main, diag version} in Section \ref{sec: proof main thm}. For the rest of this section, we assume this theorem and derive some consequences.

\begin{theorem}\label{cor: two varieties}
Let $V,W \subset X_{n,S}$ be varieties such that for every $I \subset \{1,\dots,n\}$ and $\p_I: X_{n,S} \to X_{|I|,S}$ the co-ordinate projection onto the factors indexed by $I$, we have
\[\dim(\p_I(V))+\dim(\p_I(W)) \geq \dim X_{|I|}\]
(where dimension means absolute dimension in all cases). Moreover if $S = \operatorname{Spec}\mathbb F_q$, we also suppose that $\p_i(V)$ has a dense set of points isogenous to some point on $\p_i(W)$ for all $i \leq n$.

Then the subset of points on $V\times W$ consisting of pairs of isogenous points is dense in $V\times W$.
\end{theorem}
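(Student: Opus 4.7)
My plan is to reduce Theorem \ref{cor: two varieties} directly to Theorem \ref{thm: main, diag version} by taking $Y = V \times_S W \subset X_{n,S} \times_S X_{n,S} = X_{n,S}^2$. Unwinding the definition of $\I$, the set of pairs of isogenous points in $V \times_S W$ is exactly $\I(Y, \Delta)$, so density there yields the conclusion. The task thus reduces to verifying the two hypotheses (``enough dimension'' and ``good projections'') on $Y$.

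The dimension verification is essentially bookkeeping. For $I \subset \{1, \ldots, n\}$ with $|I| \geq 2$, one observes $\pi_I(Y) = \p_I(V) \times_S \p_I(W)$, of absolute dimension $\dim \p_I(V) + \dim \p_I(W) - \dim S$. Combined with the identity $\dim X_{|I|,S} = |I| + \dim S$ and the dimension hypothesis of the Corollary, this gives $\dim \pi_I(Y) \geq |I|$, as required.

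The more delicate verification is ``good projections'' for $\pi_i(Y) = \p_i(V) \times_S \p_i(W) \subset X_1^2$. When $S$ is the spectrum of a number field, $\pi_i(Y)$ is irreducible (since $V, W$ are geometrically integral, so are $\p_i(V), \p_i(W)$) and has positive dimension by the $|I|=1$ case of the dimension hypothesis, so goodness follows from Theorem \ref{thm: Good in char 0 fn field}. When $S$ is the spectrum of a ring of integers, goodness is automatic by the remark in Theorem \ref{thm: main, diag version}. The interesting case is $S = \Spec \mathbb F_q$, where geometric integrality forces each $\p_i(V), \p_i(W)$ to be either all of $X_1$ or a single closed point. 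The subcase of two points is excluded by the $|I|=1$ dimension hypothesis; two copies of $X_1$ give $\pi_i(Y) = X_1^2$, good by the third case of Theorem \ref{thm: chai-oort}; in the mixed case $\pi_i(Y) = X_1 \times \{p\}$, the Corollary's isogeny hypothesis forces $p$ to be ordinary (a supersingular $p$ admits only finitely many isogenous $j$-invariants, so ``dense in $X_1$'' would fail), and then the second case of Theorem \ref{thm: chai-oort} gives goodness.

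Having verified both hypotheses, Theorem \ref{thm: main, diag version} applied to $Y$ delivers the density of $\I(Y, \Delta)$ in $Y = V \times_S W$. The main obstacle, in the sense of new content, has already been absorbed by Theorem \ref{thm: main, diag version} itself; the work remaining here is a translation of hypotheses, whose only subtle point is the case analysis over $\mathbb F_q$ showing that the Corollary's ``dense isogenous points'' hypothesis, together with geometric integrality and the dimension condition, is strong enough to upgrade to the Chai-Oort notion of goodness required by Theorem \ref{thm: main, diag version}.
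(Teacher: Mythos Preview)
Your proposal is correct and follows exactly the paper's approach: set $Y = V \times_S W$ and verify the two hypotheses of Theorem \ref{thm: main, diag version}. The paper's own proof is two terse sentences (the dimension check and the assertion that good projections are ``easily seen to hold''), so your case analysis over $\mathbb F_q$ simply spells out what the paper leaves implicit.
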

\begin{proof}
Take $Y = V\times_S W \subset X_n\times_S X_n$ and apply Theorem \ref{thm: main, diag version}. To check that the dimension constraint is satisfied, note that $\dim Y = \dim V + \dim W -\dim S \geq \dim X_n - \dim S = n$ (and similarly for the co-ordinate projections). The condition on good projections is easily seen to hold too.

\end{proof}

\begin{corollary}\label{cor: divisor curve intersection}
Suppose $D \subset X_{n,S}$ is a variety of absolute dimension $n-1$ (or a divisor) and $C \subset X_{n,S}$ is a relative curve over $S$ (or of absolute dimension $1$ respectively) such that the projections $\p_i(C)\times_S\p_i(D) \subset X(1)^2_S$ have dense intersection with Hecke translates of the diagonal.

Then the subset of points on $C\times_S D$ consisting of pairs of isogenous points is dense in $C\times_S D$.
\end{corollary}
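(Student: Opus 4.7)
The plan is to apply Theorem~\ref{cor: two varieties} to $V=C$ and $W=D$. The hypothesis that each projection $\p_i(C)\times\p_i(D)$ has dense intersection with Hecke translates of the diagonal is stronger than the single-coordinate density condition needed by that theorem, so the main task is to verify the dimension condition $\dim\p_I(C)+\dim\p_I(D)\geq|I|$ for every $I\subset\{1,\ldots,n\}$.

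Two structural facts reduce this verification to the single-coordinate case. First, because $D$ is a divisor with defining equation $f$, a fiber-dimension count shows $\dim\p_I(D)\geq|I|-1$ for every $I$: either some coordinate $t_j$ with $j\notin I$ appears in $f$ (making $\p_I|_D$ dominant, hence $\dim\p_I(D)=|I|$), or $f$ involves only coordinates in $I$ (so $D=\p_I^{-1}(\p_I(D))$ with $\p_I(D)$ a divisor in $X_{|I|}$ of dimension $|I|-1$). Second, because $C$ is a curve, $\dim\p_I(C)\in\{0,1\}$, with value $1$ precisely when $\p_i|_C$ is non-constant for some $i\in I$. Consequently, if $\p_i|_C$ is non-constant for every $i$, then $\dim\p_I(C)=1$ for every nonempty $I$, the dimension condition follows at once, and Theorem~\ref{cor: two varieties} applies directly.

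Otherwise I argue by induction on $n$, with base case $n=1$ immediate from the hypothesis. Suppose $\p_{i_0}|_C$ is constant with value $c_{i_0}\in X_1$. Passing to an irreducible component of $D$, the hypothesis on $\{c_{i_0}\}\times\p_{i_0}(D)$ forces either (a) $\p_{i_0}|_D$ also constant with value isogenous to $c_{i_0}$, or (b) $\p_{i_0}(D)=X_1$ with $A:=\{e\in X_1: e\sim c_{i_0}\}$ Zariski-dense in $X_1$. In case (a) one drops coordinate $i_0$ to view $C$ and $D$ as a curve and divisor in $X_{n-1}$ (the $i_0$-coordinate being automatically isogenous for every pair), and the inductive hypothesis concludes. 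In case (b), for each $e\in A$ the fiber $D_e:=D\cap\p_{i_0}^{-1}(e)$ is generically a divisor in $X_{n-1}$, the inductive hypothesis applied to $(C,D_e)$ yields dense isogenous pairs in $C\times D_e$, and their union over $e\in A$ yields density in $C\times D$ since $\bigcup_{e\in A} D_e$ is dense in $D$. The main obstacle is verifying that the coordinate-projection hypothesis for $(C,D)$ descends to $(C,D_e)$ for a dense set of $e$: when $\p_{\{i_0,j\}}|_D$ is dominant the generic fiber satisfies $\p_j(D_e)=X_1$ and the transfer is routine, but in the non-dominant case $\p_j(D_e)$ is a finite subset of $\p_j(D)$, and one must leverage the goodness of $\p_j(C)\times\p_j(D)$—in particular the non-supersingular behavior over $\mathbb F_q$—to ensure the density hypothesis transfers for suitably many $e$.
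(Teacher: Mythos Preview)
Your overall strategy is exactly the paper's: verify $\dim\p_I(D)\ge|I|-1$, reduce to the case where every $\p_i|_C$ is non-constant so that $\dim\p_I(C)=1$, and then invoke Theorem~\ref{thm: main, diag version} (equivalently Theorem~\ref{cor: two varieties}). The paper's proof of this corollary is in fact terser than yours: it simply asserts ``we can easily reduce to the case where $\p_i(C)$ is not a point for any $i\le n$'' and gives no further justification. Your cases (a) and (b), and the fibration over the isogeny class of $c_{i_0}$ in case~(b), already go beyond what the paper writes down.

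The obstacle you flag at the end is genuine, and it is worth noting that the paper does not address it either. The delicate sub-case is when \emph{several} coordinates of $C$ are constant. If $\p_{i_0}(C)=\{c_{i_0}\}$ and $\p_j(C)=\{c_j\}$ while $\p_{\{i_0,j\}}(D)$ is only a curve $\Gamma\subset X_1^2$, then transferring the hypothesis to $D_e$ at coordinate $j$ asks for infinitely many $e\sim c_{i_0}$ with some point of $\Gamma_e$ isogenous to $c_j$; this is an unlikely-intersection statement about $\Gamma$ and the point $(c_{i_0},c_j)$ in $X_2$, not something the main theorem provides. More globally, if one lets $J=\{i:\p_i|_C\text{ constant}\}$ and $D$ happens to factor as $\p_J(D)\times X_{J^c}$, the desired density in $C\times D$ reduces to density of $\I(\p_J(D),\{c_J\})$ in the divisor $\p_J(D)\subset X_{|J|}$, which is again outside the complementary-dimension regime. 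So your hesitation here is well founded; your proof is at least as complete as the paper's, and the ``easy'' reduction deserves more care than either version supplies.
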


\begin{proof}

The point is that we can reduce to the case where the dimension of $C$ does not drop under any projection. More precisely:

For any $I \subset \{1,\dots,n\}$, $\p_I(D)$ has absolute dimension at least $|I|-1$ (or contains a divisor). Moreover, we can easily reduce to the case where $\p_i(C)$ is not a point for any $i\leq n$. Once we make this reduction, $\p_I(C)$ is a relative curve for all $I$ (or has absolute dimension $1$) and therefore
\[\dim(\p_I(D)) + \dim(\p_I(C)) = \dim X_{|I|},\]
so that $C\times_S D$ has "enough dimension". Note also that $C\times_S D$ has "good projections" by assumption so that we can apply Theorem \ref{thm: main, diag version}.

\end{proof}

\begin{corollary}\label{rmk: k isogeny classes}
Suppose $V \subset X_{n,S}$ is a variety of absolute dimension $d$ and let $Z \subset V$ be the set of points $v\in V$ such that $\{t_1(v),\dots,t_n(v)\}$ define at most $n-d$ distinct isogeny classes. If $S$ is the spectrum of a finite field, suppose further that $\p_i(V)$ is not a supersingular point. 

Then $Z$ is dense in $V$. 
\end{corollary}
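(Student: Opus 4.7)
The plan is to deduce this corollary from Theorem \ref{cor: two varieties} by choosing one well-designed auxiliary subvariety $W \subset X_{n,S}$ built from a single ``fat'' diagonal together with singletons. For clarity I sketch the finite-field case; the same construction adapts to the arithmetic setting by the usual bookkeeping of absolute versus relative dimension.

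Since the function field $k(V)$ has transcendence degree $d$ and is generated by the coordinate restrictions $t_1|_V, \dots, t_n|_V$, there is a subset $B \subset \{1,\dots,n\}$ with $|B| = d$ such that $\{t_i|_V\}_{i \in B}$ is a transcendence basis. Pick any $e_0 \in \{1,\dots,n\} \setminus B$ and set
\[W := \{(u_1,\dots,u_n) \in X_{n,S} : u_i = u_j \text{ for all } i, j \in B \cup \{e_0\}\},\]
so that $\dim W = n - d$ and $\dim V + \dim W = \dim X_{n,S}$. I will apply Theorem \ref{cor: two varieties} to the pair $(V,W)$.

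The heart of the argument is verifying the projection-dimension hypothesis. For any $I \subset \{1,\dots,n\}$, the codimension of $p_I(W)$ in $X_{|I|}$ equals $\max(|I \cap (B \cup \{e_0\})| - 1,\ 0)$, while the independence of $B$ on $V$ yields
\[\dim p_I(V) \;\geq\; \dim p_{I \cap B}(V) \;=\; |I \cap B| \;\geq\; |I \cap (B \cup \{e_0\})| - 1,\]
and hence $\dim p_I(V) + \dim p_I(W) \geq \dim X_{|I|}$. The good-projection condition (relevant only when $S = \Spec \mathbb F_q$) is automatic: $\pi_i(W) = X_1$ for every $i$ since $W$ imposes no constraint on a single coordinate, and the corollary's hypothesis rules out $\pi_i(V)$ being a supersingular point.

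Theorem \ref{cor: two varieties} will then furnish a dense set of points $v \in V$ for which there exists $w \in W$ with $A_v$ and $A_w$ geometrically isogenous as (semi-)abelian varieties. By Poincar\'e complete reducibility the multisets of isogeny classes of simple factors of $A_v = \prod_i E_{v_i}$ and $A_w = \prod_i E_{w_i}$ coincide, and the defining relations of $W$ collapse the $d+1$ coordinates indexed by $B \cup \{e_0\}$ into a single isogeny class, so that multiset has at most $1 + (n-d-1) = n-d$ distinct classes, placing $v \in Z$.

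The main subtlety I expect is the choice of $W$: a priori the unpredictable behavior of $\dim p_I(V)$ across subsets $I$ seems to demand a case analysis over many partitions of $\{1,\dots,n\}$. The key observation is that a transcendence basis $B$ provides the uniform lower bound $\dim p_I(V) \geq |I \cap B|$, which precisely matches the codimension profile of the ``fat-part-plus-singletons'' partition, so one choice of $W$ handles every $I$ without further tuning.
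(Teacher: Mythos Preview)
Your argument is correct and arrives at the same conclusion as the paper, but by a genuinely different construction of the auxiliary $W$.

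The paper takes $W$ to be the full $S_n$-orbit of the locus where the last $d$ coordinates each coincide with one of the first $n-d$; equivalently, $W$ is the (reducible) union of all ``at most $n-d$ distinct values'' strata. With this symmetric choice one has $\dim p_I(W) = \min(|I|,\,n-d)$, and the complementary bound on $V$ is the crude fiber-dimension estimate $\dim p_I(V) \geq \max(0,\,d-(n-|I|))$, which requires no information about $V$ beyond $\dim V = d$.

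Your approach instead opens up $V$: you fix a transcendence basis $B$ for $k(V)$, use it to manufacture a \emph{single irreducible} fat diagonal $W$, and then exploit the sharper bound $\dim p_I(V) \geq |I \cap B|$ coming from algebraic independence. This is a nice trade-off: you pay for the choice of $B$ but gain an irreducible $W$ and a one-line verification of the projection inequality, avoiding the $S_n$-symmetrization entirely. The paper's version has the complementary virtue that it never looks inside $V$.

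Two small remarks. First, the appeal to Poincar\'e complete reducibility in your last paragraph is more than you need: in this paper ``$v$ isogenous to $w$'' is taken coordinate-wise (this is why the paper symmetrizes over $S_n$), so from $t_i(v)\sim t_i(w)$ and $t_i(w)=t_j(w)$ for $i,j\in B\cup\{e_0\}$ you get the collapse of those $d+1$ coordinates into one isogeny class directly. Second, your construction tacitly assumes $d<n$ so that $e_0$ exists; this is of course also implicit in the paper's statement, since for $d=n$ the conclusion is vacuous.
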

\begin{proof}
Let 
\[W_0 = \{(t_1,\dots,t_{n-d},s_1,\dots,s_d) : t_i,s_j \in \mathbb P^1 \text{ and for each } j, s_j = t_i \text{ for some } i.\} \]
and $W$ be the $S_n$ orbit of $W_0$ under the standard action permuting the co-ordinates. By definition, we have $Z = \I(V,W)$. In this case, note that for any $I \subset \{1,\dots,n\}$,
\[\dim p_I(W) = \min(\dim X_{|I|},\dim X_{n-d})\]
while $\dim p_I(V) \geq \max\{0,d - (n-|I|)\}$. In any case, we see that $\dim p_I(V) + \dim p_I(W) \geq \dim X_{|I|}$ as required. Moreover, since $p_i(V)$ is not a supersingular point and $p_i(W) = X_1$, we see that $V\times W$ satisfies the condition of "good projections". 

Altogether, we can now apply Theorem \ref{cor: two varieties} to conclude the proof.
\end{proof}

\subsection{Proof of Zariski dense intersections}\label{sec: proof main thm}

In this section, we prove Theorem \ref{thm: main, diag version} by an induction on the ambient dimension (\textit{and restrict to $S = \operatorname{Spec} k$, the spectrum of a finite field or number field until Theorem \ref{thm: ring of integers of a number field case}}). We will use the notation defined just before Theorem \ref{thm: main, diag version} throughout. The base case of $n=1$ is of course Theorem \ref{thm: chai-oort} (Chai-Oort) for $k$ a finite field and Theorem \ref{thm: Good in char 0 fn field} for $k$ a number field. Our strategy will be to define infinitely many divisors
\[\{D_1,D_2,\dots\} \subset Y\]
so that there are infinitely many $r \in \mathbb N$ with $\pi_1(D_r)$ good and moreover, so that the $\wpi_1(D_r)$ satisfy the inductive hypothesis relative to $X_{n-1}$. This will prove that $\I(D_r,\Delta)$ is dense and thus, $\overline{\I(Y,\Delta)}$ will contain infinitely many distinct divisors $D_r$ and hence be equal to $Y$. We can suppose that $Y$ is geometrically integral by passing to an extension if needed and considering each component separately. We make a further reduction to supposing that $\dim(\pi_i(Y)) \geq 1$ for all $i=1,\dots,n$:

Suppose otherwise that $\dim \pi_i(Y) = 0$ and hence, $\pi_i(Y)$ is a good point. This is the simplest case since $t_i(y)$ is isogenous to $s_i(y)$ for all $y$. We can simply replace $Y$ by $\wpi_i(Y) \cong Y$ and conclude by the inductive hypothesis.

To define $D_r$, we consider $\pi_1(Y) \subset X_1^2$. By the assumption that $Y$ has "good projections", $\pi_1(Y)$ is good. We have the following possibilities:
\begin{enumerate}
    \item Suppose $\dim \pi_1(Y) = 1$. Since this is a good curve, we can find infinitely many points $E_1,E_2,\dots$ in $C$ such that $t_1(E_r)$ is isogenous to $s_1(E_r)$ (by an isogeny defined by $\Gamma_{n_r}$, say). Since $\dim \pi_1(Y) = 1$ in this case, the $E_r$ are also divisors and we define
    \[D_r = \pi_1^{-1}(E_r)\cap Y \subset Y.\]
    \item Suppose $\pi_1(Y) = X_1^2.$ In this case, we take $E_r = \Gamma_r$ (so $n_r = r$) and
    \[D_r = \pi_1^{-1}(E_r)\cap Y \subset Y.\]
\end{enumerate}

In both the cases above, it is clear that the $D_r$ can be chosen to be pairwise disjoint (by restricting to a component and throwing away some $r$ if needed) since their projections $E_r$ can be so chosen. We will also make the further simplifying assumption that the $D_r, E_r$ are irreducible (as can always be done by restricting to a component).

We have chosen $E_r$ in each case so that $t_1(e)$ is isogenous to $s_1(e)$ for all $e \in E_r$ ( and hence also for all $d\in D_r$). It remains to show that $F_r = \wpi_1(D_r)$ satisfies the inductive hypothesis. We prove that all but finitely many of the $F_r$ have "good projections" by using that $Y$ has "good projections".

\begin{lemma}\label{lem: F_r has good projections}
For all but finitely many $k$ and any $i\geq 2$, $\pi_i(F_r) = \pi_i(D_r) \subset X_1^2$ is good.
\end{lemma}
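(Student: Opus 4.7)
The plan is to classify the possible failure modes of $\pi_i(D_r)$ being bad and eliminate each for all but finitely many $r$. By Theorems \ref{thm: chai-oort} and \ref{thm: Good in char 0 fn field}, the only bad closed irreducible subvarieties of $X_1^2$ are non-isogenous points and, in the finite-field case, the supersingular horizontal/vertical lines $L$. Since $\pi_i(Y)$ is good by hypothesis and $\pi_i(D_r) \subset \pi_i(Y)$, it suffices to rule out these two failure modes.

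For the supersingular-line mode, I would argue that each such $L$ is a proper closed subset of $\pi_i(Y)$: if $\pi_i(Y) = X_1^2$, then $L$ is a proper divisor; if $\pi_i(Y)$ is a good curve, then $L \neq \pi_i(Y)$, so $L \cap \pi_i(Y)$ is zero-dimensional. Hence $\pi_i^{-1}(L) \cap Y$ is a proper closed subset of $Y$ of dimension at most $\dim Y - 1$ with finitely many irreducible components, and only finitely many of the pairwise disjoint $D_r$ can lie inside it. Summing over the finitely many supersingular $L$ gives only finitely many bad $r$.

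For the non-isogenous-point mode, I would invoke the "enough dimension" hypothesis for $I = \{1,i\}$: setting $Z := \pi_{\{1,i\}}(Y) \subset X_2^2$, one has $\dim Z \geq 2$, and one checks directly that $\pi_{\{1,i\}}(D_r) = Z \cap \mathrm{pr}_1^{-1}(E_r)$, where $\mathrm{pr}_1, \mathrm{pr}_2: X_2^2 \to X_1^2$ are the two natural projections onto the first and second pairs of coordinates. Upper-semicontinuity of fiber dimension then gives $\dim \pi_{\{1,i\}}(D_r) \geq \dim Z - \dim \pi_1(Y) + \dim E_r = \dim Z - 1 \geq 1$. Assuming $\pi_i(D_r) = \{p_r\}$ forces $\pi_{\{1,i\}}(D_r) \subset E_r \times \{p_r\}$, a subvariety of dimension $\dim E_r$. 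When $\dim \pi_1(Y) = 1$ and $E_r$ is a point, this is an immediate contradiction and rules out all such $r$; when $\dim \pi_1(Y) = 2$ and $E_r = \Gamma_r$, it forces $\dim Z = 2$ and $\Gamma_r \times \{p_r\} = \pi_{\{1,i\}}(D_r) \subset Z$.

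The main obstacle is the remaining step: to rule out $\Gamma_r \times \{p_r\} \subset Z$ for infinitely many $r$ when $\dim Z = 2$. The plan is to exploit the unbounded bidegree of Hecke correspondences against the bounded degree of fibers of $\mathrm{pr}_2: Z \to \pi_i(Y)$ in a flat family. When $\dim \pi_i(Y) = 1$, generic fibers of $\mathrm{pr}_2$ are one-dimensional, and generic flatness bounds their degree off a finite set, so $\Gamma_r \subset \mathrm{pr}_2^{-1}(p_r) \cap Z$ forces $\deg \Gamma_r$ bounded, leaving only finitely many $r$. When $\dim \pi_i(Y) = 2$, a quick argument using irreducibility of $Z$ shows that the exceptional locus $T \subset X_1^2$ where $\mathrm{pr}_2$-fibers have positive dimension is in fact finite (otherwise $\mathrm{pr}_2^{-1}(T) \cap Z$ would have dimension $2$ and equal $Z$, contradicting the dominance of $\mathrm{pr}_2$); if infinitely many $r$ shared the same $p_r = p \in T$, then $\mathrm{pr}_2^{-1}(p) \cap Z$ would contain infinitely many distinct $\Gamma_r$, hence be Zariski-dense in $X_1^2 \times \{p\}$, forcing $Z = X_1^2 \times \{p\}$ and again contradicting dominance of $\mathrm{pr}_2$.
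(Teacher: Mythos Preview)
Your argument is correct, but it is organized rather differently from the paper's and is somewhat heavier in its final case analysis.

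Both proofs dispose of the ``bad curve'' (supersingular line) failure mode in the same way: each bad $L$ has $\pi_i^{-1}(L)\cap Y$ a proper closed subset with finitely many top-dimensional components, so it can swallow only finitely many of the pairwise-distinct divisors $D_r$.

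The real divergence is in the ``bad point'' mode. The paper splits on $\dim\pi_i(Y)$. When $\dim\pi_i(Y)=2$ it simply observes that the locus in $Y$ where the fibre dimension of $\pi_i$ jumps to $\dim Y-1$ is a proper closed subset, hence contains only finitely many $D_r$; this handles contraction to \emph{any} point, good or bad, in one stroke. When $\pi_i(Y)$ is a curve, the paper assumes infinitely many $D_r$ are contracted to bad points, takes the (infinitely many) \emph{good} points $p_\ell\in\pi_i(Y)$, and notes that each $Z_\ell=\pi_i^{-1}(p_\ell)\cap Y$ is disjoint from all these $D_r$; hence $\pi_1(Z_\ell)$ misses the ample divisor $E_r=\Gamma_{n_r}$ in $X_1^2$ and is therefore a point. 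Contracting infinitely many divisors forces $\pi_1(Y)$ to be a curve, so each $E_r$ is a point, and then $\pi_{\{1,i\}}$ contracts infinitely many $D_r$ to points, forcing $\dim\pi_{\{1,i\}}(Y)\le 1$ --- contradicting ``enough dimension''. This is short and uses ampleness of the Hecke curves in an elegant way.

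Your route instead pushes everything through $Z=\pi_{\{1,i\}}(Y)$: you check $\pi_{\{1,i\}}(D_r)=Z\cap\mathrm{pr}_1^{-1}(E_r)$ and use fibre dimension to get $\dim\pi_{\{1,i\}}(D_r)\ge\dim Z-1\ge 1$, which immediately kills the case $\dim\pi_1(Y)=1$. The residual case $\dim\pi_1(Y)=2$, $\dim Z=2$ you then finish with a degree/component count against the unbounded degree of $\Gamma_r$. This works, though one small point is worth making explicit: in your sub-case $\dim\pi_i(Y)=1$, the generic-flatness bound on fibre degree only applies for $p_r$ in a dense open, and you should say (exactly as you do in the $\dim\pi_i(Y)=2$ sub-case) that if infinitely many $p_r$ coincide with one of the finitely many non-flat points $q$, then $\mathrm{pr}_2^{-1}(q)\cap Z$ contains infinitely many $\Gamma_r$, hence equals $X_1^2\times\{q\}$, forcing $Z=X_1^2\times\{q\}$ and contradicting $\dim\pi_i(Y)=1$.

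In short: your proof is valid; the paper's is a bit slicker because the ampleness trick collapses the $\dim\pi_i(Y)=1$ case without any residual degree bookkeeping, while your approach trades that trick for a direct dimension count plus a finiteness-of-components endgame.
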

\begin{proof}

We will consider two cases depending on $\dim \pi_i(Y) \in \{2,1\}$. 

First suppose that $\dim \pi_i(Y) = 2$ so that $\pi_i(Y) = X_1^2$. Note that there are only finitely many curves in $X_1^2$ that are not good, corresponding to $X_1\times\{p\}$ or $\{p\}\times X_1$ with $p$ supersingular in the positive characteristic case (and no bad curves in characteristic $0$). There will be at most finitely many divisors that get contracted to a point and finitely many other divisors that get contracted to the bad curves described above. We can simply exclude the finitely many $F_r$ that are in this set since the image of any other $F_r$ will be a good curve.

Suppose now that $\pi_i(Y)$ is a curve and let $R$ be the set of $r\in \mathbb N$ such that $\pi_i(F_r)$ is a bad point. We would like to show that this is a finite set. \textit{Assume for contradiction that $K$ is infinite.}

Under this assumption, we first prove that $\pi_1(Y)$ is also a curve: Since $\pi_i(Y)$ is a good curve, it contains infinitely many good points $p_1,p_2,\dots$. Let $Z_\ell = \pi_1^{-1}(p_\ell) \subset Y$. The $Z_\ell$ cannot intersect any of the $D_r$ for $k \in K$. Therefore, $\pi_1(Z_\ell)$ is a closed subscheme of $\pi_1(Y)$ that avoids the divisor $E_r = \Gamma_{n_r}$. In the Picard group $\Gamma_r$ has class $(\phi(r),\phi(r))$\footnote{The degree of the $r$-th Hecke translate is denoted $\phi(r)$ and all we need is that it is a positive number.} and is an ample divisor, therefore it intersects every effective divisor non trivially. This forces $\pi_1(Z_\ell)$ to be a point for all the $\ell$ and since $\pi_1$ contracts infinitely many divisors (the $Z_\ell$) to a point, the image $\pi_1(Y)$ has to be a curve. 

In particular, since $\pi_1(Y)$ is a curve, $E_r \in X_1^2$ is a point for all $r$. Now, consider the map $\pi_{1i}: Y \to \pi_1(Y)\times\pi_i(Y)$. The divisors $D_r$ for $r \in R$ get contracted to the point $E_r$ along $\pi_1$ and by assumption, the $D_r$ also get contraced to a (bad) point along $\pi_i$. Therefore, $\pi_{1i}$ contracts $D_r$ for all $r\in R$ and since this is an infinite set, $\pi_{12}(Y)$ is forced to be a curve and has dimension $1 < 2$, contradicting our assumption that $Y$ has "enough dimension" with $I = \{1,i\}$. 

\end{proof}

Next, we show that the $F_r$ have "enough dimension". We first prove that $\dim(\pi_I(D_r))$ is large when $I = \{2,3,\dots,n\}$.

\begin{lemma}\label{lem: Dk dim drop by 1}
For $I = \{2,3,\dots,n\}$ and all but finitely many $r$:
\[\dim(\pi_I(D_r)) \geq n-1.\]
\end{lemma}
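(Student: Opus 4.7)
The plan is to compute $\dim \pi_I(D_r) = \dim \wpi_1(D_r)$ via the identity $\dim \wpi_1(D_r) = \dim D_r - (\text{generic fiber dim of } \wpi_1|_{D_r})$. From the ``enough dimension'' hypothesis we have $\dim Y \geq n$ and $\dim F \geq n-1$ for $F := \wpi_1(Y) \subset X_{n-1}^2$. For $s \in \wpi_1(D_r)$, the fiber of $\wpi_1|_{D_r}$ over $s$ equals $Y_s \cap E_r$, where $Y_s \subset X_1^2$ denotes the fiber of $\wpi_1|_Y$ over $s$.

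In case 1 ($\dim \pi_1(Y) = 1$), $D_r$ is the fiber of $\pi_1|_Y$ over the good point $E_r$. For all but finitely many $r$ (those avoiding the finite locus of special points on the curve $\pi_1(Y)$ where the fiber dimension jumps), $\dim D_r = \dim Y - 1 \geq n-1$. Moreover $D_r \subset \{E_r\} \times X_{n-1}^2$, so $\wpi_1|_{D_r}$ is a closed immersion, giving $\dim \wpi_1(D_r) = \dim D_r \geq n-1$. In case 2 ($\pi_1(Y) = X_1^2$), dominance of $\pi_1|_Y$ and the fact that $E_r$ is a divisor yield $\dim D_r = \dim Y - 1 \geq n-1$ for every $r$. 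Set $e := \dim Y - \dim F \in \{0,1,2\}$, the generic fiber dimension of $\wpi_1|_Y$. If $e = 2$, then $Y_s = X_1^2$ generically, and irreducibility forces $Y = X_1^2 \times F$, whence $D_r = E_r \times F$ and $\wpi_1(D_r) = F$. If $e = 0$, then $\wpi_1|_Y$ is generically finite, so its restriction to $D_r$ is too, and $\dim \wpi_1(D_r) = \dim D_r \geq n-1$.

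The core case is $e = 1$, where the generic $Y_s$ is a (possibly reducible) $1$-dimensional curve in $X_1^2$, and one must show $Y_s \cap E_r$ is nonempty and $0$-dimensional for generic $s$. For $0$-dimensionality: if $E_r$ were a component of $Y_s$ for generic $s$, then $E_r \times F \subseteq Y$, and the dimension count $\dim(E_r \times F) = 1 + \dim F = \dim Y$ together with irreducibility of $Y$ would give $Y = E_r \times F$, contradicting $\pi_1(Y) = X_1^2$. Hence $E_r$ is not a component of $Y_s$ for generic $s$, so $Y_s \cap E_r$ is $0$-dimensional. For nonemptiness: the Hecke correspondence $E_r = \Gamma_r$ has class $(\phi(r), \phi(r))$ in $\operatorname{Pic}(X_1^2) \cong \mathbb Z^2$ with strictly positive components, hence is ample, and so pairs strictly positively with any effective curve class $Y_s$ in $X_1^2$, forcing a nonempty intersection. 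Combining, the generic fiber of $\wpi_1|_{D_r}$ is nonempty and $0$-dimensional, giving $\dim \wpi_1(D_r) = \dim D_r \geq n-1$. I expect this sub-case to be the main obstacle, the key inputs being the ampleness of $\Gamma_r$ and the dimension/irreducibility argument ruling out $E_r \times F \subseteq Y$ in case 2.
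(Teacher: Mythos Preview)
Your proof is correct and follows essentially the same approach as the paper: a case split on the generic fiber dimension of $\wpi_1|_Y$, with the ampleness of $\Gamma_r$ in $X_1^2$ supplying nonemptiness of $Y_s\cap E_r$ in the $1$-dimensional fiber case. Two minor remarks: in your $e=0$ sub-case you need to exclude the finitely many $D_r$ contained in the locus where the fiber dimension of $\wpi_1|_Y$ jumps (the paper does this explicitly); and in the $e=1$ sub-case your $0$-dimensionality argument, while correct, is not needed---nonemptiness of $Y_s\cap E_r$ for generic $s$ already gives $\overline{\wpi_1(D_r)}=F$, whence $\dim\wpi_1(D_r)=\dim F=\dim Y-1\geq n-1$, which is how the paper concludes.
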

\begin{proof}
The proof breaks up into three cases depending on the dimension of the generic fiber of $\pi_I$ restricted to $Y$.

Suppose first that $\pi_I$ restricted to $Y$ has generic fibers of dimension $0$. In this case, there will be some proper closed locus on $Y$ such that the fiber dimension of $Y$ is at least $1$. We exclude the finitely many $D_r$ contained in this closed locus. For any other $D_r$, the generic fiber dimension along $\pi_I$ will be $0$ and hence
\[\dim(\pi_I(D_r)) = \dim(D_r) = \dim(Y)-1 \geq n-1.\]

Next, suppose that $\pi_I$ restricted to $Y$ has generic fibers of dimension $1$. Once again, there will be some proper closed locus on $\pi_I(Y)$ for which the fiber dimension is at least $2$. We exclude the finitely many $D_r$ mapping into this closed locus. For any other $Q \in \pi_I(Y)$, the fiber will be a curve $C_Q$ that maps injectively to $X_1^2$ along $\pi_1$. Then $\Gamma_{n_r} \subset X_1^2$, being an ample divisor in $\pi_1(Y)$, intersects $\pi_1(C_Q)$ non trivially. In other words, $Q \in \pi_I(D_r)$ for a generic point $Q$ and hence
\[\dim(\pi_I(D_r)) = \dim(\pi_I(Y)) = \dim(Y)-1 \geq n-1.\]

Finally, suppose that the generic fiber dimension of $\pi_I$ restricted to $Y$ is $2$. In this case, $Y = X_1^2 \times \pi_I(Y)$ and $D_r = \Gamma_{n_r}\times \pi_I(Y)$ so that $\pi_I(D_r) = \pi_I(Y)$ and 
\[\dim(\pi_I(D_r)) = \dim(\pi_I(Y)) \geq n-1\]
by the assumption of "enough dimension" for $Y$.

\end{proof}

Now, we apply the lemma to prove that the $F_r$ have "enough dimension" along arbitrary projections.

\begin{lemma}\label{lem: F_r have enough dimension}
For all but finitely many $F_r$ and $I \subset \{2,\dots,n\}$ with $|I| \geq 2$, we have
\[\dim(\pi_I(F_r)) \geq |I|.\]
\end{lemma}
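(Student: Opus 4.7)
The plan is to bootstrap Lemma \ref{lem: Dk dim drop by 1}, which already establishes the conclusion in the special case $I = \{2,\ldots,n\}$, to an arbitrary $I \subset \{2,\ldots,n\}$ with $|I| \geq 2$. The first observation is that $\pi_I(F_r) = \pi_I(D_r)$, because $\pi_I$ factors through $\wpi_1$ whenever $1 \notin I$, so the statement is really a lower bound on $\dim \pi_I(D_r)$.

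Fix such an $I$ and set $I' = I \cup \{1\}$, $Y' = \pi_{I'}(Y) \subset X_{|I'|,S}^2$. My first step is to check that $Y'$ inherits both hypotheses from $Y$ relative to the smaller ambient product: the "enough dimension" condition follows because $\pi_J(Y') = \pi_J(Y)$ for $J \subset I'$, and the "good projections" condition because $\pi_i(Y') = \pi_i(Y)$ for $i \in I'$. In particular $\pi_1(Y') = \pi_1(Y)$, so the sequence of divisors $E_r$ used at the start of Section \ref{sec: proof main thm} depends only on this common projection and is literally the same whether we build it from $Y$ or from $Y'$.

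Next, I would verify the identification $\pi_{I'}(D_r) = (\pi_1|_{Y'})^{-1}(E_r) \cap Y'$; that is, the projection of $D_r$ under $\pi_{I'}$ is precisely the divisor $D_r'$ that the main construction would cut out inside $Y'$. One containment is immediate from the definitions, and the reverse is a short surjectivity argument using that $\pi_{I'}|_Y : Y \twoheadrightarrow Y'$. Once this is in place, applying Lemma \ref{lem: Dk dim drop by 1} to $Y'$, with $I' \setminus \{1\} = I$ playing the role of the full complementary index set there, yields $\dim \pi_I(D_r') \geq |I|$ for all but finitely many $r$. Since $\pi_I(D_r') = \pi_I(D_r) = \pi_I(F_r)$, this is exactly the desired bound. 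A union over the finitely many subsets $I \subset \{2,\ldots,n\}$ with $|I| \geq 2$ merges the finite exceptional sets into a single finite exceptional set that works uniformly.

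The step I expect to be most delicate is the identification $\pi_{I'}(D_r) = D_r'$, which requires that no divisor-theoretic information is lost when projecting from $Y$ to $Y'$. This should ultimately reduce to the observation that $E_r$ and $D_r$ are defined entirely in terms of the first coordinate projection, combined with the surjectivity of $\pi_{I'}|_Y$. Once this compatibility is in hand, the hypotheses needed to feed $Y'$ into Lemma \ref{lem: Dk dim drop by 1} follow formally from the corresponding hypotheses on $Y$, and no further geometric input is required.
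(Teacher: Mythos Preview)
Your proposal is correct and follows essentially the same route as the paper: set $J = I \cup \{1\}$, project $Y$ to $Y' = \pi_J(Y)$, identify $\pi_J(D_r)$ with the divisor $\pi_{1,J}^{-1}(E_r)$ cut out inside $Y'$, and then apply Lemma~\ref{lem: Dk dim drop by 1} to $Y'$ in place of $Y$. The only difference is that you spell out the verification of $\pi_{I'}(D_r) = D_r'$ and the union over $I$ more explicitly than the paper does.
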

\begin{proof}
Let $J = \{1\}\cup I$ and let us decompose the projection $\pi_I$ as $\pi_I\circ\pi_J$. By the assumption that $Y$ has "enough dimension" along projections, $\dim \pi_J(Y) \geq |J| = |I|+1$. Moreover, $\pi_J(D_r)$ is a divisor on $\pi_J(Y)$ since we have $\pi_J(D_r) = \pi_{1,J}^{-1}(E_r)$ for
\begin{align*}
    \pi_{1,J}:\pi_J(Y) &\to X_1^2\\
                    y &\to (t_1(y),s_1(y)).
\end{align*}
In particular, we can identify $\pi_I(D_r)$ with $\pi_I(\pi_J(D_r))$ and applying the previous lemma with $\pi_J(Y),\pi_J(D_r)$ playing the role of $Y,D_r$ respectively, we see that
\[\dim(\pi_I(F_r)) = \dim(\pi_I(D_r)) = \dim(\pi_J(Y))-1 \geq |J|-1 = |I|.\]

\end{proof}

Putting everything together

\begin{proof}[Proof of Theorem \ref{thm: main, diag version} when $S = \operatorname{Spec} k$ for $k$ a finite field or number field.]
Let us consider the sequence of divisors $D_1,D_2,\dots$ constructed above. By Lemmas \ref{lem: F_r has good projections} and $\ref{lem: F_r have enough dimension}$, we have proven that the $F_r = \wpi_1(D_r)$ satisfy the inductive hypothesis for infinitely many $r$ and for these $r$, we thus have $F_r \in \overline{\I(\wpi_1(Y),\Delta)}$. 

Moreover, we have $s_1(x) = t_1(x)$ for $x \in D_r$ by fiat. Therefore, $D_r \in \overline{\I(Y,\Delta)}$ for infinitely many $r$. Since $\overline{\I(Y,\Delta)}$ contains infinitely many distinct divisors, this proves that it is equal to $Y$, completing the entire proof.
\end{proof}

Finally, we prove Theorem \ref{thm: main, diag version} in the case of $\mathscr O_k$ the ring of integers of a number field $k$ using Theorem \ref{thm: Charles}. In fact, this case is easier than the previous case considered where $S$ is the spectrum of a field since having "good projections" is automatic over $\mathscr O_k$ by Theorem \ref{thm: Charles}. 

\begin{theorem}\label{thm: ring of integers of a number field case}
Suppose we have a variety $Y \subset X_{n,S}\times_{S} X_{n,S}$ with $S = \operatorname{Spec} \mathscr O_k$ the ring of integers of a number field such that for all $I \subset \{1,\dots,n\}$,
\[\dim(\pi_I(Y)) \geq |I|\]
(where dimension means absolute dimension as usual).

Then $\I(Y,\Delta)$ is dense in $Y$.
\end{theorem}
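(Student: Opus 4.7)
The plan is to run the same induction on $n$ as in the proof of Theorem \ref{thm: main, diag version}, with the key simplification that the "good projections" hypothesis is automatic over $\mathscr O_k$: any variety $Z \subset X_{1,S}^2$ dominating $S$ has $\I(Z,\Delta)$ dense in $Z$, either by applying Theorem \ref{thm: Good in char 0 fn field} to a positive-dimensional generic fibre $Z_\eta$, or, when $Z_\eta$ is zero-dimensional so that $Z$ decomposes after a finite extension into finitely many sections of $X_1^2 \to S$, by applying Theorem \ref{thm: Charles} to each of the corresponding pairs of elliptic curves over $\mathscr O_k$. This simultaneously handles the base case $n=1$ and eliminates the need for any goodness bookkeeping at later stages of the induction.

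For the inductive step from $n-1$ to $n$, first reorder coordinates so that $\dim \pi_1(Y) \geq 2$; such a coordinate must exist, because if every $\dim \pi_j(Y) = 1$ then each $\pi_j(Y)$ would be a horizontal finite cover of $S$ of absolute dimension $1$, forcing $Y \hookrightarrow \pi_1(Y) \times_S \cdots \times_S \pi_n(Y)$ to have absolute dimension at most $1$, contradicting $\dim Y \geq n \geq 2$. With $\dim \pi_1(Y) \in \{2,3\}$, define $E_r = \Gamma_r \cap \pi_1(Y)$ using the Hecke divisors $\Gamma_r \subset X_{1,S}^2$, and set $D_r = \pi_1^{-1}(E_r) \cap Y$; the relative ampleness of $\Gamma_r$ over $\mathscr O_k$ makes $E_r$ an absolute-codimension-one divisor in $\pi_1(Y)$ on which $t_1$ and $s_1$ are isogenous by construction. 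The analogues of Lemmas \ref{lem: Dk dim drop by 1} and \ref{lem: F_r have enough dimension} then carry through (using relative ampleness in place of absolute ampleness, together with a generic-fibre intersection analysis), so $F_r = \wpi_1(D_r) \subset X_{n-1,S}^2$ satisfies the inductive hypothesis for all but finitely many $r$. Invoking the inductive hypothesis yields $\I(F_r,\Delta)$ dense in $F_r$, which combined with $t_1 \sim s_1$ on $D_r$ gives $D_r \subset \overline{\I(Y,\Delta)}$; since the $D_r$ are pairwise distinct absolute-codimension-one subvarieties of the geometrically irreducible $Y$, their union is Zariski dense in $Y$ and the induction closes.

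The main subtlety is adapting the dimension-drop lemma to the arithmetic setting: in the field case one exploits that the ample divisor $\Gamma_r$ in $\pi_1(Y)$ meets every curve therein, whereas over $\mathscr O_k$ one must argue that for generic $Q \in \pi_I(Y)$ the horizontal curve $\pi_1(C_Q)$ meets $E_r = \Gamma_r \cap \pi_1(Y)$; this requires combining fibre-wise ampleness of $\Gamma_r$ over $\mathscr O_k$ with Theorem \ref{thm: Charles} applied to the generic elliptic-curve pair carried by $\pi_1(C_Q)$, so as to guarantee the non-emptiness of the intersection for cofinitely many $r$.
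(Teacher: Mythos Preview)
Your approach is a genuinely different route from the paper's, and it is worth contrasting them. You faithfully port the field-case argument: slice $Y$ by $\pi_1^{-1}(\Gamma_r)$ to produce divisors $D_r$, then verify via analogues of Lemmas \ref{lem: Dk dim drop by 1} and \ref{lem: F_r have enough dimension} that $F_r = \wpi_1(D_r)$ inherits the dimension hypothesis, and finally apply induction to each $F_r$. The paper instead \emph{reverses the order of the two reductions}: it applies the inductive hypothesis directly to $\wpi_1(Y)\subset X_{n-1,S}^2$ (whose dimension hypothesis is immediate from that of $Y$, no lemma needed), obtaining $\I(\wpi_1(Y),\Delta)$ dense, and then for each $w$ in this dense set observes that the fibre $\wpi_1^{-1}(w)$, viewed in $X_{1,S}^2$ via $\pi_1$, is automatically good---either by Theorem \ref{thm: Charles} when it is a section, or by Theorem \ref{thm: Good in char 0 fn field} on the generic fibre otherwise. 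This ``fibre-then-slice'' order is unavailable over a field (a $0$-dimensional fibre need not be good there), but over $\mathscr O_k$ it collapses the entire argument to a few lines and eliminates the dimension-drop lemmas altogether. Your approach should also go through---in particular the ``main subtlety'' you flag is milder than you suggest, since $S=\operatorname{Spec}\mathscr O_k$ is affine and hence the relatively ample $\Gamma_r$ is genuinely ample on $X_{1,S}^2$, so any curve therein meets it without appeal to Theorem \ref{thm: Charles}---but it does more work than necessary.
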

\begin{proof}
We will prove the theorem by induction. Since the inductive hypothesis is valid for $\wpi_1(Y) \subset X_{n-1}\times_{\mathscr O_k}X_{n-1}$, we conclude that $\I(\wpi_1(Y),\Delta)$ is Zariski dense in $\wpi_1(Y)$. Moreover for any $w \in \wpi_1(Y)(\mathscr O_k)$, the fiber $\wpi_1^{-1}(w)$ considered as a subset of $X_1^2$ under the $\pi_1$ projection is either a $\mathscr O_L$ valued point (for $k \subset L$ a finite extension), a relative curve or all of $X_1^2$. 

In the first case, it is automatically good by Theorem \ref{thm: Charles} and in the other two cases, the generic point $\pi_1(\wpi_1^{-1}(w)_k)$ is good by Theorem \ref{thm: Good in char 0 fn field} and spreading out a generic isogeny shows that $\pi_1(\wpi_1^{-1}(w))$ is itself good. In any case, we have shown that if $w \in \I(\wpi_1(Y),\Delta)$, then $\overline{\wpi_1^{-1}(w)} = \overline{\I(\wpi_1^{-1}(w),\Delta)}$. Together with the fact that $\I(\wpi_1(Y),\Delta)$ is dense in $\wpi_1(Y)$ by the inductive hypothesis, this completes the proof.
\end{proof}

\bibliographystyle{alpha}

\bibliography{sample.bib} 


\end{document}